\definecolor{gray}{gray}{0.4}
\title[Order sixteen]{Classification of order sixteen non-symplectic automorphisms on K3 surfaces}
\author{Dima Al Tabbaa}
\address{Laboratoire de Math\'ematiques et Applications, UMR CNRS 6086,
			Universit\'e de Poitiers, T\'el\'eport 2, Boulevard Marie et Pierre Curie,
			86962 FUTUROSCOPE CHASSENEUIL, France}
\email{Dima.Al.Tabbaa@math.univ-poitiers.fr}
\author{Alessandra Sarti}
\address{Laboratoire de Math\'ematiques et Applications, UMR CNRS 6086,
			Universit\'e de Poitiers, T\'el\'eport 2, Boulevard Marie et Pierre Curie,
			86962 FUTUROSCOPE CHASSENEUIL, France}
\email{sarti@math.univ-poitiers.fr}
\urladdr{http://www-math.sp2mi.univ-poitiers.fr/~sarti/}
\author{Shingo Taki}
\address{School of Information Environment, Tokyo Denki University,
2-1200 Muzai Gakuendai, Inzai-shi, Chiba 270-1382, Japan}
\email{staki@mail.dendai.ac.jp}
\urladdr{http://www.math.sie.dendai.ac.jp/~taki/}
\date{\today}
\newtheorem{lemma}{Lemma}
\newtheorem{pro}{Proposition}
\newtheorem{theorem}{Theorem}[section]
\theoremstyle{definition}
\newtheorem{example}[theorem]{Example}
\newtheorem{remark}[theorem]{Remark}
\DeclareMathOperator{\Pic}{Pic}
\DeclareMathOperator{\rank}{rk}
\DeclareMathOperator{\Fix}{Fix}
\DeclareMathOperator{\rk}{rk}
\newcommand{\IC}{\mathbb{C}}
\newcommand{\IZ}{\mathbb{Z}}
\newcommand{\IP}{\mathbb{P}}
\subjclass[2010]{Primary 14J28; Secondary 14J50, 14J10}
\keywords{non-symplectic automorphism, K3 surfaces}
\thanks{The third-named author was partially supported by 
Research Institute for Science and Technology of Tokyo Denki University 
Grant Number Q14K-06/Japan.}
\begin{document}

\begin{abstract}
In the paper we classify K3 surfaces with non-symplectic automorphism of order $16$
in full generality.
We show that 
the fixed locus contains only rational curves and points and we completely classify the seven possible
configurations.  If the N\'eron-Severi group has rank 6, there are two possibilities and 
 if its rank is 14, there are five possibilities. In particular if the action
of the automorphism is trivial on the N\'eron-Severi group, then we show that its rank is six.

\end{abstract}

\maketitle

\section*{Introduction}
Automorphisms of K3 surfaces were widely studied in the last years,  in particular also for the recent
 relation with the Bloch conjecture, see e.g. \cite{Huybrechts}, \cite{FU}. Here we study (purely) non-symplectic automorphisms of order $d$,
i.e. automorphisms that multiply the non degenerate holomorphic two form by a primitive $d$--root of the unity. 
The study of non-symplectic automorphism of prime order was completed by Nikulin in \cite{nikulinfactor}
in the case of involutions, and more recently by Artebani, Sarti and Taki in several papers \cite{AS3, ast, takiauto}
for the other prime orders. The study of non-symplectic automorphisms of not prime order turn out to
be more complicated, in fact in this situation the ''generic'' case does not imply that the action of the automorphism 
 is trivial on the N\'eron-Severi lattice. In the paper \cite{Taki} Taki completely describes the case 
when the action is trivial on the N\'eron-Severi lattice and the automorphism is a prime power. If we consider non-symplectic automorphisms that are of order $2^t$, then by results of Nikulin we have $0\leq t\leq 5$, and 
by a recent paper by Taki \cite{Taki32} there is only one K3 surface that admits an order 32 non-symplectic automorphism. 
Some further results in this direction are contained in a paper by Sch\"utt \cite{matthias} in the case of automorphisms
of a $2$-power order and in a paper by Artebani and Sarti \cite{artsa}, in the case of the order 4. In this last paper 
the hypothesis of trivial action on 
the N\'eron-Severi lattice is left out. Here we 
consider 
the case of the order $16$ in all generality, which together with the order $8$ 
remained quite unexplored. 

Since the Euler function of $16$ divides the rank of the transcendental lattice
(see \cite{Nikulin1}) the rank of the N\'eron-Severi group can be only 6 or 14. 
More precisely let $X$ be a K3 surface, $\omega_X$ a generator of $H^{2,0}(X)$, $\sigma$ an order 
16 automorphism such that $\sigma^* \omega_X=\zeta_{16} \omega_X$, where $\zeta_{16}$ denotes a primitive order 16 root of unity. 
We first show that if the fixed locus of $\sigma$ contains a curve then its genus is zero (Proposition \ref{ratio}), 
then in the case that $\rk\Pic(X)=6$ we have the following number of isolated fixed points $N$ and fixed rational curves $k$, (Theorem \ref{rank6}):
$$
(\Pic(X), N, k)=(U\oplus D_4,6,1),~\mbox{or} \qquad (U(2)\oplus D_4, 4,0). 
$$
 In the first case the action is trivial on $\Pic(X)$ but not in the second case.
If $\rk\Pic(X)=14$ and $\sigma^4$ fixes an elliptic curve $C$,  
then $\sigma$ preserves $C$ and the induced $\sigma$-invariant elliptic fibration induced has a reducible fiber of type $IV^*$
and the number of isolated fixed points and fixed rational curves are as follows: $(N,k)=(8,1)$ or $(6,0)$. In the first case $\sigma$  preserves each component of the fiber $IV^*$ and in the second case it acts as a reflection on it. In any case the action is not trivial on $\Pic(X)$, (Proposition \ref{elliptic}). Finally
if $\rk \Pic(X)=14$ and if $\Fix(\sigma^4)$ contains a curve of genus bigger than 1 we have the three cases with 
$(\Pic(X),N,k)$ equal to:
$$
(U\oplus D_4\oplus E_8,12,1),\qquad (U(2)\oplus D_4\oplus E_8, 4, 0)~\mbox{or} \qquad (U(2)\oplus D_4\oplus E_8, 10, 1).
$$

In these three cases the action of $\sigma$ is not trivial on $\Pic(X)$, (Theorem \ref{rank14}). 
This in particular shows that there does not exist a K3 surface $X$ with Picard number $14$ with an automorphism of order 16 acting non symplectically on it and trivially on $\Pic(X)$. This corrects a small mistake in the paper \cite{Taki},
where the author claims that such a K3 surface exists. 

We construct the K3 surfaces
in the Examples \ref{fibr0}, \ref {fibr1}, \ref{fibr2}, except in the case of $\Pic(X)=U(2)\oplus D_4\oplus E_8$, 
and $(N,k)=(10,1)$ which we do not know if it exists. For the proofs of the Theorems \ref{elliptic}, \ref{rank6}, \ref{rank14},  we use Lefschetz formulas, the results on non-symplectic involutions and on non-symplectic order four automorphisms are contained in \cite{artsa}, \cite{Taki}. We use also results on non-symplectic automorphisms of order eight.
The results of this paper are partially contained in the forthcoming PhD thesis of the first author
under the supervision of the second author. The results of the paper on order eight non-symplectic automorphism as
 well as a classification of K3 surfaces with non-symplectic automorphism of order eight 
 will be contained in the PhD thesis of Al Tabbaa, \cite{dimathesis} too. 
  
 
{\em Acknowledgements: We warmly thank Michela Artebani, Samuel Boissi\`ere and Alice Garbagnati for several interesting discussions.}  
\section{The fixed locus}

Let $X$ be a K3 surface with a {\it non-symplectic} automorphism $\sigma$ of order 16, 
this means that the action of $\sigma^*$ on the vector space $H^{2,0}(X)=\IC \omega_X$ of holomorphic two-forms is not trivial. 
More precisely we assume that $\sigma^*\omega_X=\zeta_{16} \omega_X$, where 
$\zeta_{16}$ is a primitive root of the unity of order 16 (this action is called sometime in the literature {\it purely non-symplectic}). 
For simplicity we denote by $\zeta:=\zeta_{16}$ and by $\xi:=\zeta_{16}^2$ which is a primitive root of the unity of order $8$. 

 We denote furthermore by $r_{\sigma^i},l_{\sigma^i}, m_{\sigma^i}$, $m^1_{\sigma^i}$, $m^2_{\sigma^i}$, $i=1,2,4,8$ 
the rank of the eigenspace of $(\sigma^i)^*$ in $H^2(X,\IC)$ relative to the eigenvalues $1,-1$, $i$, $\xi$ and $\zeta$.  
For simplicity for $i=1$ we just write $r_{\sigma}$, $l_{\sigma},\ldots$ or even $r, l,\ldots$. 
The following relations holds:
\begin{eqnarray}\label{ranks}
\begin{array}{cccc}
r_{\sigma^2}=r_\sigma+l_\sigma,& l_{\sigma^2}=2m_{\sigma}& m_{\sigma^2}=2m^1_\sigma, &
m^1_{\sigma^2}=2m^2_\sigma\\ 
r_{\sigma^4}=r_\sigma+l_\sigma+2m_\sigma& l_{\sigma^4}=4m^1_\sigma&m_{\sigma^4}=4m^2_\sigma&\\
r_{\sigma^8}=r_\sigma+l_\sigma+2m_\sigma+4m^1_\sigma& l_{\sigma^8}=8m^2_\sigma&&\\
r_{\sigma}+l_{\sigma}+2m_{\sigma}+4m^1_{\sigma}+8m^2_{\sigma}=22&&&\\
\end{array}
\end{eqnarray}

 Moreover, let
$$
S(\sigma^i)=\{x\in H^2(X,\IZ)\,|\, (\sigma^i)^*(x)=x\},
$$
$$
T(\sigma^i)=S(\sigma^i)^{\perp}\cap H^2(X,\IZ).
$$ 
Observe that in the generic case we can assume that $\Pic(X)=S(\sigma^8)$, i.e. the action of the involution $\sigma^8$ is trivial on $\Pic(X)$. We have moreover that $S(\sigma)\subset \Pic(X)$ and so the transcendental lattice satisfies 
$T_X\subset T(\sigma)$. Since the action of $\sigma$ on  $T_X$ and $T(\sigma)$ is by primitive roots of the unity, see
\cite{Nikulin1}, we have $\rk(T_X)=8m^2_{\sigma}$. Since $\rk(T_X)\leq 21$ we have in fact only two possibilities which are $m^2_{\sigma}=1$ or $2$ so that $\rk S(\sigma)=14$ respectively $6$. 
Observe moreover that $r_\sigma>0$ since there is always an ample invariant class on $X$ (see \cite[Theorem 3.1]{Nikulin1}). 

We start recalling the following result about non-symplectic involutions (see \cite[Theorem 4.2.2]{nikulinfactor}).
\begin{theorem}\label{inv}
Let $\tau$ be a non-symplectic involution on a K3 surface $X$. The fixed locus of $\tau$ is 
either empty, the disjoint union of two elliptic curves or the disjoint union of a 
smooth curve of genus $g\geq 0$ and 
$k$ smooth rational curves. 

Moreover, its fixed lattice $S(\tau)\subset \Pic(X)$ is a $2$-elementary lattice with determinant $2^a$ such 
that:
\begin{itemize}
\item $S(\tau)\cong U(2)\oplus E_8(2)$ iff the fixed locus of $\tau$ is empty;
\item  $S(\tau)\cong U\oplus E_8(2)$ iff $\tau$ fixes two elliptic curves;
\item $2 g=22-\rk S(\tau)-a$ and $2k=\rank S(\tau)-a$ otherwise.
\end{itemize}  
\end{theorem}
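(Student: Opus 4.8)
The plan is to combine a local analysis of the fixed points with two global counting relations, one topological and one arithmetic, and then to read off the exceptional lattices from Nikulin's classification of $2$-elementary lattices. First I would study $\tau$ near a fixed point $p$. By Cartan's lemma the action is linearizable, so $d\tau_p$ is an involution of $T_pX$ with eigenvalues in $\{\pm 1\}$. Since $\tau^*\omega_X=-\omega_X$ and $\omega_X$ trivializes the canonical bundle, the induced action on $\wedge^2 T_p^*X$ is multiplication by $-1$, whence $\det d\tau_p=-1$. This forces the eigenvalues to be $(1,-1)$, so the $(+1)$-eigenspace is one-dimensional: $\Fix(\tau)$ is smooth of pure dimension one near $p$, there are no isolated fixed points, and $\Fix(\tau)$ is a disjoint union of smooth curves (possibly empty). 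The trichotomy then reduces to controlling how many components carry positive genus, which I postpone until the lattice invariants are available.

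Second, I would show that $S(\tau)$ is a $2$-elementary hyperbolic lattice. Because $\tau^*\omega_X=-\omega_X$, the classes $\mathrm{Re}\,\omega_X,\mathrm{Im}\,\omega_X$ lie in the $(-1)$-eigenspace, so $S(\tau)\subset H^{1,1}\cap H^2(X,\IZ)=\Pic(X)$; averaging an ample class produces an invariant class of positive square, and together with the signature $(3,19)$ of $H^2(X,\IR)$ this gives $S(\tau)$ signature $(1,r-1)$ with $r=\rk S(\tau)$. For $2$-elementarity, set $T(\tau)=S(\tau)^{\perp}$, so $\tau$ acts as $+1$ on $S(\tau)$ and $-1$ on $T(\tau)$. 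As $H^2(X,\IZ)$ is unimodular, it is an even overlattice of $S(\tau)\oplus T(\tau)$ whose glue group is the graph of an isomorphism $A_{S(\tau)}\cong A_{T(\tau)}$ of discriminant groups; this isomorphism must be $\tau$-equivariant, but $\tau$ acts as $+1$ on $A_{S(\tau)}$ and $-1$ on $A_{T(\tau)}$, so every glue vector is $2$-torsion. Hence $A_{S(\tau)}\cong(\IZ/2)^{a}$ and $\det S(\tau)=2^{a}$.

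Third comes the numerical heart: two linear relations among $g,k,r,a$. The topological Lefschetz formula gives $\chi(\Fix(\tau))=2+\tr\bigl(\tau^*\mid H^2(X,\IR)\bigr)=2+\bigl(r-(22-r)\bigr)=2r-20$, while writing $\Fix(\tau)=C_g\sqcup k\IP^1$ gives $\chi=(2-2g)+2k$, hence $g-k=11-r$. Here the holomorphic Lefschetz formula is of no help: its left side is $1-0+(-1)=0$, and by adjunction $C^2=2g-2$ on a K3 surface each fixed curve contributes $0$, so it returns only the tautology $0=0$. The second relation must therefore be arithmetic, and for it I would invoke Smith theory for $\tau$ over $\mathbb{F}_2$ (in the non-empty case): the total $\mathbb{F}_2$-Betti number of the fixed locus satisfies $\dim_{\mathbb{F}_2}H^*(\Fix(\tau);\mathbb{F}_2)=24-2a$, the defect $2a$ being exactly twice the length of the discriminant group. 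Since $\dim_{\mathbb{F}_2}H^*(\Fix(\tau);\mathbb{F}_2)=2+2g+2k$, this reads $g+k=11-a$, and adding and subtracting the two relations yields $2g=22-r-a$ and $2k=r-a$.

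Finally I would isolate the two exceptional lattices from the extreme behavior of these formulas together with Nikulin's classification of indefinite even $2$-elementary lattices by the triple $(r,a,\delta)$. An empty fixed locus means $\tau$ is free, with Enriques quotient, forcing $S(\tau)\cong U(2)\oplus E_8(2)$ (the unique such lattice with $r=10$, $a=10$, $\delta=0$); when $\tau$ fixes exactly two elliptic curves the invariant lattice is $U\oplus E_8(2)$ ($r=10$, $a=8$, $\delta=0$), and in all remaining cases the two relations apply verbatim. I expect the genuine obstacle to be precisely the arithmetic relation $g+k=11-a$: establishing that the Smith-theoretic defect is exactly twice the discriminant length — equivalently, computing the rank of $\tau-\id$ on $H^2(X;\mathbb{F}_2)$ in terms of the glue group of $S(\tau)\oplus T(\tau)$ — is the one step that is neither formal nor a routine characteristic-class computation, and it is where the integral lattice and the mod $2$ topology must be tied together.
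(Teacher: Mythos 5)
First, a point of reference: the paper does not prove this statement at all --- it is quoted verbatim as Nikulin's classification (\cite[Theorem 4.2.2]{nikulinfactor}), so the only meaningful comparison is with Nikulin's own argument. Within your proposal, the first three steps are correct and standard: the linearization showing $\Fix(\tau)$ is a disjoint union of smooth curves with no isolated points, the glue-group argument (the graph of the isomorphism $A_{S(\tau)}\cong A_{T(\tau)}$ must be $\tau$-invariant, forcing $2$-torsion) showing $\det S(\tau)=2^a$, and the topological Lefschetz computation $g-k=11-r$; your observation that the holomorphic Lefschetz formula degenerates to $0=0$ for involutions is also right.

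The genuine gap is the second relation $g+k=11-a$, which you derive from the asserted equality $\dim_{\mathbb{F}_2}H^*(\Fix(\tau);\mathbb{F}_2)=24-2a$. This is not an off-the-shelf theorem: classical Smith theory gives only the inequality $\dim H^*(F;\mathbb{F}_2)\le\dim H^*(X;\mathbb{F}_2)$ and the parity of the defect, while the exact defect is the total rank of the differentials in the Borel spectral sequence, which is \emph{not} determined by the $\IZ[\IZ/2]$-module structure of $H^2(X;\IZ)$ alone. Your own formula demonstrates this: in the free (Enriques) case one has $a=10$, so the formula would predict $\dim H^*(F;\mathbb{F}_2)=4$ while $F=\emptyset$; hence non-emptiness must enter through genuine geometry, and simply restricting to ``the non-empty case'' does not produce a proof. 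You candidly flag this as the crux, but that means the proposal reduces Nikulin's theorem to an unproven claim of essentially the same depth. Nikulin supplies exactly this missing input by passing to the quotient $Y=X/\tau$: when $F\ne\emptyset$, $Y$ is a smooth rational surface and $F$ maps isomorphically onto a branch curve $B\in|-2K_Y|$; then $r=b_2(Y)=10-K_Y^2$ and adjunction recover $g-k=11-r$, and the relation involving $a$ comes from comparing the finite-index sublattice $\pi^*H^2(Y,\IZ)\subseteq S(\tau)$, whose index is $2^{(r-a)/2}$, with the number of components of $B$ --- i.e.\ the link between the glue group and the mod $2$ topology of the fixed curve is made via the double cover, not via abstract Smith theory. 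Two further items are asserted but never established in your write-up: the trichotomy itself (ruling out two disjoint fixed curves of genus $\ge 1$ requires hyperbolicity of $S(\tau)$: two orthogonal effective classes of non-negative square force both squares to vanish, hence both curves elliptic and fibers of one invariant fibration), and the identifications $S(\tau)\cong U(2)\oplus E_8(2)$, resp.\ $U\oplus E_8(2)$, in the two exceptional cases, which again require the quotient (Enriques, resp.\ elliptic) geometry to pin down $(r,a,\delta)$.
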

Recall that at a fixed point for $\sigma^i$ the action can be linearized and is given by a matrix 
as
$$
A^i_{j,k}= \left(\begin{array}{cc}
\zeta^j_{(16/i)} &0\\
0& \zeta^k_{(16/i)}
\end{array}
\right)
$$
with $j+k=1\mod (16/i)$. This means that the fixed locus of $\sigma^i$ is the disjoint union of smooth curves and isolated points. We denote by $N_{\sigma^i}$ respectively by $k_{\sigma^i}$ the fixed points and fixed rational
curves in  $\Fix(\sigma^i)$. Moreover by $n^{\sigma^i}_{j,k}$ we denote the number of isolated fixed points of type $(j,k)$ by 
$\sigma^i$. In several cases when it is clear which automorphism are we considering we just write $n_{j,k}$.

\begin{lemma}\label{exchanged}
Let $A$ be the number of pairs of rational curves interchanged by $\sigma^4$ and fixed by $\sigma^8$,
then $A\in 4\IZ$.
\end{lemma}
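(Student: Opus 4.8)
The plan is to recast the count $A$ as an orbit problem for the action of $\langle\sigma\rangle$. Let $\mathcal{C}$ denote the set of smooth rational curves $C$ with $\sigma^8(C)=C$ but $\sigma^4(C)\neq C$. By hypothesis these curves come in the $A$ unordered pairs $\{C,\sigma^4(C)\}$, so that $|\mathcal{C}|=2A$, and it suffices to prove $8\mid |\mathcal{C}|$. I would obtain this by showing that $\langle\sigma\rangle$ permutes $\mathcal{C}$ with every orbit of length exactly $8$.

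First I would check that $\sigma$ preserves $\mathcal{C}$. Since $\langle\sigma\rangle$ is abelian, for any $C\in\mathcal{C}$ we have $\sigma^8(\sigma(C))=\sigma(\sigma^8(C))=\sigma(C)$, so $\sigma(C)$ is again fixed by $\sigma^8$; and $\sigma^4(\sigma(C))=\sigma(\sigma^4(C))\neq\sigma(C)$, since otherwise applying $\sigma^{-1}$ would give $\sigma^4(C)=C$, contrary to $C\in\mathcal{C}$. Hence $\sigma(C)\in\mathcal{C}$, and $\sigma$ acts on $\mathcal{C}$ as a permutation. Because $\sigma^8$ fixes every curve of $\mathcal{C}$ by the very definition of $\mathcal{C}$, this permutation action factors through the quotient $\langle\sigma\rangle/\langle\sigma^8\rangle\cong\IZ/8\IZ$.

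Now I would apply orbit--stabilizer to this $\IZ/8\IZ$-action. The image of $\sigma^4$ in $\IZ/8\IZ$ is the unique element of order $2$, and by construction it moves every $C\in\mathcal{C}$. Consequently the stabilizer of any $C$ is a subgroup of $\IZ/8\IZ$ not containing that element; but every nontrivial subgroup of $\IZ/8\IZ$ contains the unique subgroup of order $2$, so the stabilizer must be trivial. Therefore every orbit has length $8$, whence $8\mid|\mathcal{C}|=2A$ and $A\in4\IZ$, as claimed.

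The argument is essentially formal once the set-up is right, so I do not expect a serious obstacle; the only points that need care are verifying that $\sigma$ genuinely maps $\mathcal{C}$ into itself (rather than accidentally sending a curve of $\mathcal{C}$ to a curve that happens to be fixed by $\sigma^4$) and that the action indeed descends to $\IZ/8\IZ$. Both reduce to the commutativity of $\langle\sigma\rangle$ together with the defining conditions $\sigma^8(C)=C$ and $\sigma^4(C)\neq C$, so the real content is simply the observation that these two conditions force the stabilizer in $\IZ/8\IZ$ to be trivial.
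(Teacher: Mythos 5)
Your proof is correct and follows essentially the same route as the paper: the paper observes that such a curve has stabilizer of order $2$ in $\langle\sigma\rangle$ (forced by the chain of subgroups of a cyclic $2$-group, exactly your "every nontrivial subgroup contains the unique order-two element" step), hence a $\sigma$-orbit of length $8$, giving $A\in 4\IZ$. Your version merely spells out the details (passing to the quotient $\IZ/8\IZ$ and verifying $\sigma$ preserves the set of such curves) that the paper leaves implicit.
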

\begin{proof}
A curve as in the statement has stabilizer group in $\langle \sigma\rangle$ of order 2.
Hence its $\sigma$-orbit has length 8, so we get that $A$ is a multiple of $4$. 
\end{proof}

\begin{pro}\label{genus}
Let $\sigma$ be a purely non-symplectic automorphism of order sixteen on a K3 surface $X$. 
Then if $C\subset \Fix(\sigma)$ we have $g(C)\leq 1$. 
\end{pro}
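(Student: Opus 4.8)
The plan is to reduce everything to the non-symplectic involution $\sigma^8$ and then extract the genus bound from the holomorphic Lefschetz fixed point formula applied to $\sigma$. First I would linearise $\sigma$ at a point of $C$: writing the action as $(x,y)\mapsto(x,\zeta y)$ with $C=\{y=0\}$, the condition $\sigma^*\omega_X=\zeta\omega_X$ forces the normal eigenvalue to be a fixed primitive sixteenth root of unity (in the notation of the matrices $A_{j,k}$ a point of $C$ is of type $(0,1)$), so \emph{every} curve in $\Fix(\sigma)$ carries the same normal eigenvalue. Since $C\subset\Fix(\sigma)\subset\Fix(\sigma^8)$ and $\sigma^8$ is a non-symplectic involution, Theorem \ref{inv} already restricts the situation drastically: if $\Fix(\sigma^8)$ is two elliptic curves then $g(C)=1$ and we are done, while otherwise at most one component has genus $\ge 1$, so if $g(C)\ge 2$ then $C$ is necessarily that component. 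Thus it suffices to rule out $g(C)\ge 2$.

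The main computational tool would be the holomorphic Lefschetz fixed point formula for $\sigma$. On the structure sheaf one has $\sum_q(-1)^q\tr\bigl(\sigma^*\mid H^q(X,\calO_X)\bigr)=1+\zeta^{-1}$, since $\sigma^*$ is trivial on $H^0$ and acts on $H^2(X,\calO_X)$, which is Serre-dual to $H^0(X,\Omega^2_X)=\IC\,\omega_X$, by $\zeta^{-1}$. On the geometric side an isolated fixed point of type $(j,k)$ contributes $\bigl[(1-\zeta^{-j})(1-\zeta^{-k})\bigr]^{-1}$ with $j+k\equiv 1\ (\mathrm{mod}\ 16)$, while a fixed curve $D$, using adjunction $\deg N_{D/X}=D^2=2g(D)-2$, contributes
\begin{equation*}
\frac{1-g(D)}{1-\zeta^{-1}}-\frac{\zeta^{-1}(2g(D)-2)}{(1-\zeta^{-1})^2}=\frac{(1-g(D))(1+\zeta^{-1})}{(1-\zeta^{-1})^2}.
\end{equation*}
Because all fixed curves share the same normal eigenvalue, the total curve contribution is a single nonzero constant $\kappa=\tfrac{1+\zeta^{-1}}{(1-\zeta^{-1})^2}$ times $\sum_D(1-g(D))=k_\sigma+(1-g(C))$, where $k_\sigma$ is the number of fixed rational curves. (As a sanity check, for $\sigma^8$ every curve contributes $0$ and there are no isolated points, reproducing the value $0$.)

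The main obstacle is that this single identity entangles $g(C)$ with the unknown numbers of isolated fixed points and of fixed rational curves, so by itself it does not pin $g(C)$ down. I would resolve this by exploiting that all the auxiliary data are a priori bounded: the isolated points and rational curves all lie in $\Fix(\sigma^8)$, whose topology is controlled by Theorem \ref{inv} (the number of rational components is $\tfrac12(\rk S(\sigma^8)-a)$, hence bounded), while the total number of isolated fixed points is bounded through the topological Lefschetz formula by the Euler number $24$. Each isolated-point term and each rational-curve term has bounded modulus, whereas $\kappa\,(1-g(C))$ grows linearly in $g(C)$ with $|\kappa|$ large; comparison with the fixed value $1+\zeta^{-1}$ therefore confines $g(C)$ to a finite range, and combining the analogous holomorphic Lefschetz relations for the order eight and order four automorphisms $\sigma^2$ and $\sigma^4$ (whose fixed loci also contain $C$) with the topological Lefschetz formula eliminates $g(C)\ge 2$. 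I would also record the shorter route suggested by the paper's use of the order eight analysis: since $C\subset\Fix(\sigma^2)$ and $\sigma^2$ is purely non-symplectic of order eight, any genus bound already available in that case applies verbatim.
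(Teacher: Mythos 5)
Your setup is fine as far as it goes (the linearisation along $C$, the curve contribution $\kappa\,(1-g(D))$ with $\kappa=(1+\zeta^{-1})/(1-\zeta^{-1})^2$, the reduction to $\Fix(\sigma^8)$ via Theorem \ref{inv}), but the proof has a genuine gap at its only decisive step: the closing assertion that ``combining the analogous holomorphic Lefschetz relations for $\sigma^2$ and $\sigma^4$ with the topological Lefschetz formula eliminates $g(C)\geq 2$'' is not an argument, and in fact no combination of these formulas can accomplish this. The obstruction is structural: replacing an elliptic fixed curve (or the empty set) by a fixed curve of genus $g$ together with $g-1$ fixed rational curves changes neither the holomorphic nor the topological Lefschetz count, for $\sigma$ or for any of its powers, since the curve contributions are $\kappa_i\bigl((1-g)+(g-1)\bigr)=0$ in the holomorphic formula and $(2-2g)+2(g-1)=0$ in the topological one. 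Concretely, starting from a genuinely realized configuration such as $(N,k)=(6,1)$ and adjoining ``a genus $2$ curve plus one rational curve'', one obtains numerical data with $g(C)=2$ satisfying every relation in your proposed system; so your modulus estimate can only confine $g(C)$ to a finite range, and the remaining cases cannot be excluded by Lefschetz-type bookkeeping alone. (Your use of Theorem \ref{inv} is too weak to help here: you invoke it only to bound the number of components, not the relation $2g=22-\rk S(\sigma^8)-a$ tying the genus to the lattice invariants.)

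The paper closes exactly this gap with two ingredients absent from your proposal. First, since $C\subset\Fix(\sigma)$ implies $C\subset\Fix(\sigma^4)$, and since by the relations \eqref{ranks} both $l_{\sigma^4}$ and $m_{\sigma^4}$ are divisible by $4$, the classification of non-symplectic automorphisms of order four in \cite[Theorem 4.1]{artsa} leaves a single candidate case: $(m_{\sigma^4},r_{\sigma^4},l_{\sigma^4})=(4,6,8)$ with $g(C)=2$, $N_{\sigma^4}=2$, $k_{\sigma^4}=0$, and four rational curves interchanged two by two by $\sigma^4$. Note that this candidate satisfies all Lefschetz relations for $\sigma^4$ (it occurs in the order four classification), which confirms that your toolkit cannot finish the proof. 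Second, this surviving case is killed not by a Lefschetz computation but by the orbit-length argument of Lemma \ref{exchanged}: a rational curve fixed by $\sigma^8$ but moved by $\sigma^4$ has stabilizer of order $2$ in $\langle\sigma\rangle$, hence a $\sigma$-orbit of length $8$, so the number $A$ of such pairs must lie in $4\IZ$; here $A=2$, a contradiction. Your proposal contains no analogue of this orbit-counting step, nor of the appeal to the order four classification that isolates the case to which it is applied, and without both the claimed elimination of $g(C)\geq 2$ does not go through.
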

\begin{proof}
If $C\subset\Fix(\sigma)$ with $g(C)\geq 2 $ then this is also fixed by 
$\sigma^4$ which is non-symplectic of order 4. By the relations \eqref{ranks} we have that $l_{\sigma^4}$ and 
$m_{\sigma^4}$ are multiples of $4$. Then checking in \cite[Theorem 4.1]{artsa} the only possible case 
is $(m_{\sigma^4},r_{\sigma^4},l_{\sigma^4})=(4,6,8)$ and $N_{\sigma^4}=2$, $k_{\sigma^4}=0$, $g(C)=2$. 
Moreover there are $4$ curves interchanged two by two  by $\sigma^4$ so $A=2$ contradicting Lemma \ref{exchanged}
\end{proof}
Recall also the following useful Lemma, see e.g. \cite[Lemma 4]{artsa}:
\begin{lemma}\label{tree}
Let $T=\sum_i R_i$ be a tree of smooth rational curves on a K3 surface $X$ such that each $R_i$ 
is invariant under the action of a purely non-symplectic automorphism $\sigma$ of order $k$. 
Then, the points of intersection of the rational curves $R_i$ are fixed by $\sigma$ and the action 
at one fixed point determines the action on the whole tree. 
\end{lemma}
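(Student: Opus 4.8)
The plan is to prove the two assertions separately. \emph{First}, that the intersection points are fixed: let $p$ be a point where two components $R_i,R_j$ of $T$ meet. Since $T$ is a tree of smooth rational curves, $R_i$ and $R_j$ meet transversally in the single point $p$, and since $\sigma$ preserves each component we get $\sigma(p)\in\sigma(R_i)\cap\sigma(R_j)=R_i\cap R_j=\{p\}$, so $\sigma(p)=p$. Hence every node of $T$ lies in $\Fix(\sigma)$.

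\emph{Second}, I would linearize and propagate. At a node $p$ the differential $d\sigma_p$ acts on the tangent space $T_pX$; because $\sigma$ is purely non-symplectic of order $k$ it multiplies $\omega_X$ by a primitive $k$-th root $\zeta_k$, so $d\sigma_p=\diag(\zeta_k^a,\zeta_k^b)$ with $a+b\equiv 1 \pmod k$, in the same diagonal form as the matrices $A^i_{j,k}$ recalled above. The two eigendirections are the tangent lines of the branches $R_i,R_j$ through $p$, so the action at $p$ is encoded by the pair of exponents $(a,b)$, with $a$ read along $R_i$ and $b$ along $R_j$.

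The heart of the argument is to transport this datum across a single component. Fix $R_j\cong\IP^1$ through $p$ with along-$R_j$ eigenvalue $\zeta_k^b$. If $b\not\equiv 0\pmod k$, then $\sigma|_{R_j}$ is a non-trivial finite-order automorphism of $\IP^1$, hence semisimple and conjugate to $z\mapsto \zeta_k^b z$; it has exactly two fixed points, $p$ and one further point at which the derivative along $R_j$ equals $\zeta_k^{-b}$. Any other node $q=R_j\cap R_l$ is fixed by the first part and lies on $R_j$, so it must be this second fixed point; the along-$R_j$ eigenvalue at $q$ is therefore $\zeta_k^{-b}$, and the relation $a+b\equiv 1$ at $q$ forces the along-$R_l$ eigenvalue to be $\zeta_k^{\,1+b}$. (If instead $b\equiv 0$, then $\sigma|_{R_j}$ fixes $R_j$ pointwise and the transverse eigenvalue is constantly $\zeta_k$, so the rule still holds.) Thus the action at $p$ determines the action at every adjacent node. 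Since the dual graph of $T$ is a connected tree, applying this rule along the unique path from $p$ to any vertex determines the action on all of $T$, and acyclicity guarantees that no inconsistent constraint ever arises. The step needing the most care is this transport: one must check that the second fixed point of $\sigma|_{R_j}$ is genuinely the next node of the tree---rather than an isolated fixed point lying off $T$---and then track the eigenvalues correctly through $a+b\equiv 1\pmod k$ at each node.
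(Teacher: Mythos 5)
Your proof is correct and follows what is essentially the standard argument for this lemma; note that the paper itself does not prove it but recalls it from \cite[Lemma 4]{artsa}, where the proof runs along the same lines you give. Namely: invariance of each component forces every node to be fixed, the tangent lines of the two branches are eigendirections of the linearized action $\diag(\zeta_k^a,\zeta_k^b)$ with $a+b\equiv 1 \pmod k$, and the exponents propagate across each $R_j\cong\IP^1$ using the fact that a nontrivial finite-order automorphism of $\IP^1$ has exactly two fixed points with reciprocal derivatives (your transport rule $(a,b)\mapsto(-b,1+b)$ is exactly the sequence of local actions recorded in Remark \ref{suite}).
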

\begin{remark}\label{suite}
In the case of an automorphism of order $16$, with the assumption of Lemma \ref{tree}, 
the local actions at the intersection points of the curves $R_i$ appear in the following order
(we give only the exponents of $\zeta$ in the matrix of the local action):
\begin{eqnarray*}
\ldots,(0,1), (15,2),(14,3),(13,4),(12,5),(11,6),(10,7),(9,8),\\
(8,9),(7,10),(6,11),(5,12),(4,13),(3,14),(2,15),(1,0),\ldots
\end{eqnarray*}
\end{remark}
\begin{pro}\label{lefschetz16}
Let $\sigma$ be a purely non-symplectic automorphism of order $16$ acting on a $K3$ surface $X$. Then
the fixed locus is non empty and
$$
\Fix(\sigma)=C\cup E_1 \cup\cdots\cup E_k\cup \{ p_1,\cdots, p_N \} .
$$
where $C$ is a curve of genus $g\geq 0$, the $E_i$ are rational fixed curves, $k=k_{\sigma}$ and the $p_i$ are isolated fixed points, $N=N_{\sigma}$.
Moreover $N$ is even, $4\leq N\leq 16$ and the following relations hold :
\[ \label{N1}
\tag{I}
N=n_{3,14}+n_{4,13}+n_{5,12}+n_{6,11}+2n_{7,10}+2k+1 .
\]
\[ \label{N2}
\tag{II}
N=2n_{3,14}+2n_{5,12}+2n_{7,10}+2k .
\]
\[ \label{N3}
\tag{III}
N=2+r_{\sigma}-l_{\sigma}-2k .
\]
\end{pro}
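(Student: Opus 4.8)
The plan is to read off the local structure of $\Fix(\sigma)$ from the linearisation, then feed the two Lefschetz fixed point formulas (topological and holomorphic) into the rank relations \eqref{ranks} to produce the three numerical identities, and finally combine them with positivity and the constraint $\rk S(\sigma)\in\{6,14\}$ to obtain the bounds on $N$.

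First I would record the possible local types. At an isolated fixed point the matrix $A^1_{j,k}=\diag(\zeta^j,\zeta^k)$ has $j+k\equiv 1\pmod{16}$ with $j,k\neq 0$, hence $j+k=17$ and the type is one of $(2,15),(3,14),(4,13),(5,12),(6,11),(7,10),(8,9)$; along a fixed curve one eigenvalue equals $1$, and since $\sigma^*\omega_X=\zeta\,\omega_X$ the normal eigenvalue must be $\zeta$. By Proposition \ref{genus} every fixed curve has genus $\leq 1$, so those of positive genus have genus exactly $1$ while all others are smooth rational with self-intersection $-2$; this gives the asserted decomposition $\Fix(\sigma)=C\cup E_1\cup\cdots\cup E_k\cup\{p_1,\dots,p_N\}$. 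Non-emptiness is then immediate from the holomorphic Lefschetz number, since $\sum_q(-1)^q\tr\big(\sigma^*\mid H^q(X,\calO_X)\big)=1+\zeta^{-1}\neq 0$ forces $\Fix(\sigma)\neq\emptyset$.

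Next, relation (III) together with the parity and the upper bound come from the topological Lefschetz formula. As $H^1=H^3=0$ and the traces on $H^0,H^4$ are $1$, while the eigenvalues $\pm i$, the primitive $8$th and the primitive $16$th roots of unity each sum to $0$ on $H^2$, one gets $\chi(\Fix(\sigma))=2+\tr(\sigma^*\mid H^2)=2+r_\sigma-l_\sigma$. On the other hand a genus-$1$ curve contributes $0$ and each rational curve contributes $2$ to the Euler characteristic, so $\chi(\Fix(\sigma))=N+2k$; equating the two gives (III). From the last line of \eqref{ranks} we have $r_\sigma+l_\sigma\equiv 22\equiv 0\pmod 2$, hence $r_\sigma-l_\sigma$ is even and so is $N$. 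Finally, since $\rk S(\sigma)=r_\sigma\in\{6,14\}$ and $l_\sigma,k\geq 0$, relation (III) yields $N\leq 2+r_\sigma\leq 16$.

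For the two remaining identities I would expand the holomorphic Lefschetz formula
\[
1+\zeta^{-1}=\sum_{j=2}^{8} n_{j,17-j}\,\frac{1}{(1-\zeta^{-j})(1-\zeta^{-(17-j)})}+\big(1-g(C)+k\big)\,\frac{1+\zeta^{-1}}{(1-\zeta^{-1})^2},
\]
where the curve terms $\frac{1-g}{1-\zeta^{-1}}-\frac{\zeta^{-1}C^2}{(1-\zeta^{-1})^2}$ have been simplified using $C^2=2g-2$ and $E_i^2=-2$. Writing every term in the $\IQ$-basis $\{1,\zeta^{-1},\dots,\zeta^{-7}\}$ of $\IQ(\zeta)$ via $\zeta^{-8}=-1$ and comparing rational coefficients produces several integer linear relations among the $n_{j,17-j}$ and $k$; two independent combinations of these are exactly (I) and (II). This coefficient bookkeeping, namely simplifying the cyclotomic fractions and isolating the two combinations that eliminate the genus and the self-intersection data, is the main computational obstacle.

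Finally I would establish $N\geq 4$ by combining the three relations. Setting $a=n_{3,14}+n_{5,12}+n_{7,10}$ and $b=n_{4,13}+n_{6,11}$, relation (II) gives $N=2k+2a$, while (I) gives $a=b+n_{7,10}+1\geq 1$, so $N\geq 2k+2$; hence $N\geq 4$ as soon as $k\geq 1$ or $a\geq 2$. The only residual possibility is $k=0$, $a=1$, $N=2$, which I expect to exclude using $r_\sigma\geq 6$ in (III) together with the classification of the powers $\sigma^4$ (order $4$, \cite{artsa}) and $\sigma^2$ (order $8$) propagated through \eqref{ranks}. Ruling out this last small case is the delicate point of the lower bound.
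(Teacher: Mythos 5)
Your strategy is the same as the paper's: topological Lefschetz for (III), parity and the bounds, and the holomorphic Lefschetz formula expanded over a $\IQ$-basis of $\IQ(\zeta)$ for the point-count identities. The parts you actually carry out are correct: the local linearization and the decomposition of $\Fix(\sigma)$, non-emptiness from $1+\zeta^{-1}\neq 0$, relation (III), evenness of $N$ (you deduce it from $r_\sigma+l_\sigma\equiv 0\pmod 2$ via \eqref{ranks}, the paper reads it off (II); both work), and $N\leq 16$. One small inaccuracy: $\rk S(\sigma)=r_\sigma$ is \emph{not} in $\{6,14\}$ in general (the paper's own tables have $r_\sigma=9,7,13,11,4,\dots$); what is true, and all you need, is $r_\sigma\leq\rk\Pic(X)\leq 14$ since $S(\sigma)\subset\Pic(X)$ and the transcendental lattice has rank a positive multiple of $8$.

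The genuine gap is the lower bound $N\geq 4$. Your reduction leaves the case $k=0$, $n_{7,10}=n_{4,13}=n_{6,11}=0$, $n_{3,14}+n_{5,12}=1$, $N=2$, and you only say you "expect to exclude" it. This case cannot be excluded by (I), (II), (III) and \eqref{ranks} alone: the assignment $n_{5,12}=n_{2,15}=1$, all other $n_{i,j}=0$, $k=0$, $(r_\sigma,l_\sigma,m_\sigma,m^1_\sigma,m^2_\sigma)=(3,3,0,0,2)$ satisfies all three relations, the rank equation, and $r_\sigma>0$, yet has $N=2$. The missing input sits exactly in the step you compressed: the cyclotomic coefficient comparison in the holomorphic Lefschetz formula yields not just two but several independent relations, among them (in the paper's notation)
$$
n_{4,13}+n_{5,12}-2n_{6,11}+2n_{7,10}-n_{8,9}=2k,\qquad 2n_{3,14}-2n_{4,13}+2n_{6,11}-n_{8,9}=2k,
$$
and these are what kill $N=2$: with $k=n_{7,10}=n_{4,13}=n_{6,11}=0$, the second gives $n_{8,9}=2n_{3,14}$, while (I) in the equivalent form $n_{2,15}-n_{7,10}+n_{8,9}=2k+1$ gives $n_{2,15}+n_{8,9}=1$, forcing $n_{8,9}=n_{3,14}=0$; then the first relation gives $n_{5,12}=0$, contradicting (II). So the fix is simply to retain \emph{all} the relations produced by the coefficient bookkeeping rather than only the two combinations (I) and (II); the alternative route you sketch through the classifications of $\sigma^2$ and $\sigma^4$ is left unexecuted and is not needed.
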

\begin{proof}
By Proposition \ref{genus} we know that $g(C)=0$ or $g(C)=1$. 
We use first the topological Lefschetz fixed point formula for $\sigma$. We write $r=r_\sigma$ and $l=l_\sigma$.
This gives $N+2k=\chi(\Fix(\sigma))=r-l+2$ so $r-l=N+2k-2$. Since $\rk S(\sigma)=14$ or $6$
in any case we have $N\leq 16$. We have that the Lefschetz number is $L(\sigma)=1+\zeta^{-1}$ so 
using Lefschetz formula we obtain the equations:
\begin{equation}\label{equ1}
 n_{2,15} - n_{7,10} + n_{8,9} =1+ 2k.
\end{equation}
\begin{equation}\label{equ2}
 n_{2,15} - n_{3,14} + n_{4,13} - n_{5,12} + n_{6,11} - n_{7,10} + n_{8,9} =2k .
\end{equation}
\begin{equation}\label{equ3}
n_{4,13} +n_{5,12} - 2n_{6,11} +2n_{7,10} - n_{8,9} = 2k.
\end{equation}
\begin{equation}\label{equ4}
 2n_{3,14} - 2n_{4,13} + 2n_{6,11} - n_{8,9}= 2k.
 \end{equation}
and combining \eqref{equ1}  and \eqref{equ2} we get
\begin{equation}\label{equ5}
 n_{3,14} - n_{4,13} + n_{5,12} -n_{6,11} =1.
\end{equation}

From \eqref{equ1} and \eqref{equ2} and the fact that $N=\sum n_{j,k}$ we obtain the relations \eqref{N1} and \eqref{N2} in the statement respectively. By \eqref{N1} we get that $N\geq1$ and by \eqref{N2} we find that $N$ is an even number,
thus $N\geq 2$. If $N=2$ then by \eqref{N1} we obtain $k=n_{7,10}=0$ and either $n_{3,14}$ or $n_{5,12}$ equal to 1 by relations \eqref{N1} and \eqref{N2} , thus $n_{4,13}=n_{6,11}=0$ by \eqref{N1} 
and either $n_{2,15}$ or $n_{8,9}=1$ by \eqref{equ1}. By \eqref{equ4} we obtain $n_{8,9}=2n_{3,14}$ so
$n_{8,9}=n_{3,14}=0$. By using \eqref{equ3} we obtain $n_{5,12}=0$ which is impossible. So $N\geq 4$. 
\end{proof}
\begin{remark}\label{nn} 
\begin{itemize}
\item[1)] As a direct consequence of formulas in Proposition \ref{lefschetz16} we find that if $N=4$ we have only the 
possibility with $(n_{3,14},n_{7,10},n_{8,9},k)=(1,1,2,0)$ (the other $n_{i,j}$ are zero) so that $r-l=2$.

The case $(N,k)=(8,0)$ is not possible.

If $(N,k)=(6,0)$ then $(n_{5,12},n_{6,11},n_{7,10},n_{8,9})=(2,1,1,2)$ the other $n_{i,j}$ are zero.

If $(N,k)=(6,1)$ then 
$(n_{2,15},n_{3,14},n_{7,10})=(4,1,1)$ the other $n_{i,j}$ are zero.
\item[2)] The fixed points for $\sigma$ with local action $(2,15)$, $(7,10)$, $(3,14)$, $(6,11)$, are isolated fixed points for $\sigma^4$, whence the points of type $(8,9)$, $(4,13)$ and $(5,12)$ are contained on a fixed curve for $\sigma^4$. Finally the points of type $(8,9)$ are contained on a fixed curve for $\sigma^2$. 
\end{itemize}
\end{remark}
\begin{pro}\label{ratio}
If $C\subset \Fix(\sigma)$ then $C$ is rational.
\end{pro}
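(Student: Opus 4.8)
The plan is to invoke Proposition \ref{genus}, which already gives $g(C)\le 1$, and to rule out $g(C)=1$. So suppose $C\subset\Fix(\sigma)$ is a smooth elliptic curve. By adjunction on the K3 surface $C^2=2g(C)-2=0$, hence $|C|$ is a base-point-free pencil defining an elliptic fibration $\pi\colon X\to\IP^1$ with $C$ a smooth fibre. Since $\sigma$ fixes $C$ it preserves $\pi$ and induces an automorphism $\bar\sigma$ of $\IP^1$.

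First I would determine $\bar\sigma$. Along $C$ the tangential part of the local $\sigma$-action is trivial and the determinant of the local action equals $\zeta$ (the constraint $j+k\equiv1$), so the normal multiplier of $\sigma$ along $C$ is $\zeta$; via $d\pi$ this is the multiplier of $\bar\sigma$ at $[C]$. Thus $\bar\sigma$ has order $16$, with the two fixed points $[C]$ and a second point $b_\infty$ of multiplier $\zeta^{-1}$. Because a fixed point of $\sigma$ lies on a $\bar\sigma$-invariant fibre and $C$ is pointwise fixed, every isolated fixed point of $\sigma$, and every rational curve in $\Fix(\sigma)$ (which cannot dominate the base), lies on $F_\infty=\pi^{-1}(b_\infty)$. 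A short computation with the base multiplier $\zeta^{-1}$ shows that no component of $F_\infty$ can be pointwise fixed (a pointwise-fixed component of multiplicity $m$ would force the base multiplier at $b_\infty$ to equal $\zeta^{m}$, i.e. $m\equiv-1\pmod{16}$), so $k=0$; and at a smooth point of a reduced component the local type is forced to be $(2,15)$.

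When $F_\infty$ is smooth or of multiplicative type $I_n$, every component is reduced, so all isolated fixed points have type $(2,15)$; with $k=0$, relation \eqref{equ1} becomes $N=n_{2,15}=1$, contradicting $N\ge 4$ from Proposition \ref{lefschetz16}. (The smooth case cannot occur anyway, since $\sigma$ would restrict to an order-$8$ automorphism of an elliptic curve fixing a point, the fibre-tangent multiplier being $\zeta^{2}=\xi$.)

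The hard part will be the additive fibres, where non-reduced components, and the nodes joining components of different multiplicities, can carry fixed points of the remaining admissible types $(3,14),(5,12),\dots$, so a bare count of types no longer contradicts Proposition \ref{lefschetz16}. To finish I would use that $\sigma$ permutes the components of $F_\infty$ preserving their multiplicities: the unique trivalent component of a fibre of type $IV^*,III^*,II^*$ is therefore $\bar\sigma$-invariant and its three nodes are permuted by the $2$-group $\langle\sigma\rangle$, which must fix one of them, and one checks case by case which intersection points can actually be fixed; feeding the resulting type distributions into the relations of Proposition \ref{lefschetz16}, together with the parity constraint of Lemma \ref{exchanged}, should eliminate every additive type, exactly as the analogous bookkeeping settled Proposition \ref{genus}. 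A cleaner route for this last step would be to pass to $\sigma^2$ and $\sigma^4$ (which also fix $C$) and quote the classifications of order-eight and of order-four (\cite{artsa}) non-symplectic automorphisms with a fixed elliptic curve. In either approach the additive-fibre analysis is the real obstacle.
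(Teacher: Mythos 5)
Your proposal is incomplete at exactly the decisive point. The reduction to $g(C)=1$ via Proposition \ref{genus}, the computation of the base multipliers ($\zeta$ at $[C]$, hence $\zeta^{-1}$ at $b_\infty$), the exclusion of pointwise fixed components of $F_\infty$ (so $k=0$), and the treatment of smooth and $I_n$ fibres are correct and nicely self-contained --- though for $I_n$ you should also rule out fixed points at the nodes, which are not smooth points of the fibre: if both branches are preserved, the eigenvalue product there is $\zeta$, contradicting the base multiplier $\zeta^{-1}$, and if they are swapped, the eigenvalues are opposite, so their exponents cannot sum to $1$ modulo $16$. The genuine gap is the additive case, which you explicitly defer (``should eliminate every additive type''): this is precisely the case that occurs. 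In the paper's proof, since $C\subset\Fix(\sigma)\subset\Fix(\sigma^4)$ and $l_{\sigma^4},m_{\sigma^4}\in 4\IZ$, \cite[Theorem 3.1]{artsa} forces $(m_{\sigma^4},r_{\sigma^4},l_{\sigma^4})=(4,10,4)$, so that $\sigma^4$ fixes $C$, one rational curve and six points, and the $\sigma$-invariant reducible fibre is of type $IV^*$; the contradiction is then extracted from the two possible $\sigma$-actions on $IV^*$ using Remarks \ref{nn} and \ref{suite}. Thus the portion you completed (smooth and multiplicative fibres) only excludes configurations that never arise, while the case carrying all the content is left as a plan; moreover, without quoting the order-four classification, your route must a priori treat $II$, $III$, $IV$, $I_n^*$, $IV^*$, $III^*$, $II^*$ one by one.

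For what it is worth, your multiplier formalism does close the gap, and for $IV^*$ it does so more cleanly than bookkeeping with types: at a fixed node where preserved components of multiplicities $m_1,m_2$ meet, the eigenvalues satisfy $\alpha\beta=\zeta$ and $\alpha^{m_1}\beta^{m_2}=\zeta^{-1}$, so every local type is determined by the multiplicities alone. Since $k=0$, the multiplicity-$3$ component of $IV^*$ is not pointwise fixed, so the $2$-group $\langle\sigma\rangle$ preserves exactly one of the three branches and swaps the other two; the four fixed points then have types $(4,13)$ (node $3\cap 2$), $(5,12)$ (smooth point of the multiplicity-$3$ curve), $(3,14)$ (node $2\cap 1$) and $(2,15)$ (smooth point of the multiplicity-$1$ curve), and \eqref{equ3} fails: $n_{4,13}+n_{5,12}=2\neq 0=2k$. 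This would give an alternative, arguably more intrinsic, contradiction than the paper's (which compares Remark \ref{nn} with the chain structure of Remark \ref{suite}); and the ``cleaner route'' you mention at the end --- passing to $\sigma^4$ and quoting \cite{artsa} --- is in substance the paper's actual proof. But as submitted, neither route is carried out, so the proof is not complete.
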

\begin{proof}
By \cite[Theorem 3.1]{artsa} if $g(C)=1$ and since by formulas \eqref{ranks} we have $l_{\sigma^4}, m_{\sigma^4}\in 4\IZ$, we get 
$(m_{\sigma^4},r_{\sigma^4},l_{\sigma^4})=(4,10,4)$ and the fixed locus of
$\sigma^4$ contains 1 rational fixed curve and 6 isolated fixed points (here $A=0$). Observe moreover 
that since $C\subset \Fix(\sigma)$ also $\sigma$ preserves the elliptic fibration determined by $C$. The automorphism
$\sigma^4$ acts with order four on the basis of the fibration by \cite[Theorem 3.1]{artsa} so $\sigma$ acts with order $16$ on it and fixes two points. 
One point corresponds to the smooth elliptic curve $C$ the other point to the 
fiber $IV^*$. The component of multiplicity $3$ in the fiber $IV^*$ is clearly $\sigma$-invariant. 
If it is fixed by $\sigma$ then each other component is preserved, so that $k=1$ and $N=6$. 
More precisely 
by Remark \ref{suite} we have $n_{2,15}=n_{3,14}=3$ which contradicts Remark \ref{nn}. If the component of multiplicity $3$ is $\sigma$-invariant then it contains $2$ isolated fixed points. Two branches of the fiber are exchanged
and we have $N=4$. By Remark \ref{nn} we have $n_{8,9}=2$, $n_{7,10}=1$, $n_{3,14}=1$ but this is not possible
by using the Remark \ref{suite}. 
\end{proof}
\begin{pro}\label{fix4}
The fixed locus $\Fix(\sigma^4)$ contains at least a fixed curve and we have $g(C)\leq 1$.
\end{pro}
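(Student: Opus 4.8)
The plan is to prove the two assertions separately, the substantive one being that $\Fix(\sigma^4)$ cannot be zero-dimensional. I would argue by contradiction, pitting the holomorphic and the topological Lefschetz fixed point formulas for the order four automorphism $\sigma^4$ against the integrality imposed by \eqref{ranks}. First observe that $\sigma^4$ is purely non-symplectic of order four with $(\sigma^4)^*\omega_X=\zeta^4\omega_X$, and that by \eqref{ranks} we have $m_{\sigma^4}=4m^2_\sigma\in\{4,8\}$ and $l_{\sigma^4}=4m^1_\sigma$.

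Suppose for contradiction that $\Fix(\sigma^4)$ contains no curve, so that it reduces to $N_{\sigma^4}$ isolated points. At each such point the local action is $A^4_{j,k}=\diag(\zeta^{4j},\zeta^{4k})$ with $j+k\equiv 1\pmod 4$ and $j,k\in\{1,2,3\}$, which forces the type to be $(2,3)$ or $(3,2)$. In both cases the eigenvalues are $\zeta^{8}=-1$ and $\zeta^{12}=-\zeta^4$, so the contribution to the holomorphic Lefschetz number is $\frac{1}{(1-\zeta^{8})(1-\zeta^{12})}=\frac{1-\zeta^{4}}{4}$. Since $L(\sigma^4)=1+\zeta^{-4}=1-\zeta^4$, the holomorphic Lefschetz formula gives $N_{\sigma^4}\cdot\frac{1-\zeta^4}{4}=1-\zeta^4$, hence $N_{\sigma^4}=4$.

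I would then apply the topological Lefschetz formula. The eigenvalues $\pm\zeta^4$ of $(\sigma^4)^*$ on $H^2(X,\IC)$ cancel in the trace, so $\chi(\Fix(\sigma^4))=2+r_{\sigma^4}-l_{\sigma^4}$; under the assumption this equals $N_{\sigma^4}=4$, giving $r_{\sigma^4}-l_{\sigma^4}=2$. Substituting $r_{\sigma^4}=r_\sigma+l_\sigma+2m_\sigma$ and $l_{\sigma^4}=4m^1_\sigma$ and subtracting from $r_\sigma+l_\sigma+2m_\sigma+4m^1_\sigma+8m^2_\sigma=22$ yields $8(m^1_\sigma+m^2_\sigma)=20$, i.e. $m^1_\sigma+m^2_\sigma=5/2$, which is impossible. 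Hence $\Fix(\sigma^4)$ must contain at least one fixed curve. For the genus assertion, the curve $C\subset\Fix(\sigma)$ is contained in $\Fix(\sigma^4)$ and already satisfies $g(C)\le 1$ by Proposition \ref{genus} (in fact $g(C)=0$ by Proposition \ref{ratio}).

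The main obstacle is the bookkeeping in the holomorphic formula: one must be sure that the non-symplectic constraint $j+k\equiv 1\pmod 4$ leaves exactly the two point types $(2,3)$ and $(3,2)$, that they contribute equally, and that $L(\sigma^4)$ is computed with the correct convention, so that $N_{\sigma^4}=4$ is forced unambiguously; once this is in place the contradiction follows immediately from the parity of \eqref{ranks}. As a cross-check, and as an alternative line of proof, I would also read off the statement from the classification of order four non-symplectic automorphisms in \cite[Theorem 4.1]{artsa}, verifying that every admissible triple $(m_{\sigma^4},r_{\sigma^4},l_{\sigma^4})$ with $m_{\sigma^4}\in\{4,8\}$ and $4\mid l_{\sigma^4}$ indeed has a one-dimensional fixed component.
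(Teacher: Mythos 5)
Your proof of the first assertion (existence of a fixed curve) is correct, and it takes a genuinely different route from the paper. The paper stays at the level of the order-$16$ automorphism: assuming $\Fix(\sigma^4)$ is a finite set of points, Remark \ref{nn}(2) gives $n_{4,13}=n_{5,12}=n_{8,9}=k_\sigma=0$, then \eqref{equ4} forces $n_{3,14}=n_{6,11}=0$, contradicting \eqref{equ5}. You instead apply both Lefschetz formulas directly to the order-$4$ automorphism $\sigma^4$: your local analysis is right (isolated points must have type $(2,3)$ since $j,k\in\{1,2,3\}$ and $j+k\equiv 1 \bmod 4$, each contributing $(1-\zeta^4)/4$, while $L(\sigma^4)=1-\zeta^4$), so $N_{\sigma^4}=4$, the topological formula gives $r_{\sigma^4}-l_{\sigma^4}=2$, hence $l_{\sigma^4}+m_{\sigma^4}=10$, which is incompatible with $4\mid l_{\sigma^4}$ and $4\mid m_{\sigma^4}$ from \eqref{ranks}. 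This is self-contained and avoids Remark \ref{nn} and the order-$16$ point-type bookkeeping entirely; the paper's route avoids having to invoke the holomorphic Lefschetz formula for $\sigma^4$.

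The genus assertion, however, has a genuine gap: you have proved the wrong statement. In this proposition $C$ denotes a curve contained in $\Fix(\sigma^4)$, not in $\Fix(\sigma)$; this is exactly how the proposition is used later (Proposition \ref{lefschetz8} applies it to curves fixed by $\sigma^2$, and Theorem \ref{elliptic} and the cases $g(C)=2,3$ of Theorem \ref{rank14} rely on the bound for $\sigma^4$-fixed curves). Since $\Fix(\sigma)\subseteq\Fix(\sigma^4)$ but not conversely, Propositions \ref{genus} and \ref{ratio} — which concern only $\sigma$-fixed curves — do not yield the needed bound. Your parenthetical ``in fact $g(C)=0$ by Proposition \ref{ratio}'' makes the misreading visible: $\sigma^4$ can perfectly well fix an elliptic curve (Theorem \ref{elliptic}, Example \ref{fibr0}), so genus $0$ is not forced for curves in $\Fix(\sigma^4)$, only for curves in $\Fix(\sigma)$.

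The missing argument is the one the paper gives, and it is short: if $C\subset\Fix(\sigma^4)$ has $g(C)\geq 2$, note that the proof of Proposition \ref{genus} only uses that the curve is fixed by $\sigma^4$, so it applies verbatim. Concretely, by \cite[Theorem 4.1]{artsa} together with $l_{\sigma^4},m_{\sigma^4}\in 4\IZ$, the only admissible invariants are $(m_{\sigma^4},r_{\sigma^4},l_{\sigma^4})=(4,6,8)$ with $N_{\sigma^4}=2$, $k_{\sigma^4}=0$, $g(C)=2$; in that configuration four rational curves are interchanged two by two by $\sigma^4$ and fixed by $\sigma^8$, so $A=2$, contradicting Lemma \ref{exchanged} ($A\in 4\IZ$). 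Also, your proposed cross-check reads \cite[Theorem 4.1]{artsa} as the full classification of order-four non-symplectic automorphisms; in this paper that theorem is only invoked for the case of a fixed curve of genus at least $2$ (the other cases come from Theorems 3.1, 5.1, 6.1 and 8.1 of \cite{artsa}), so the cross-check as stated would not go through either.
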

\begin{proof}
If $\Fix(\sigma^4)$ contains only isolated fixed points then by Remark \ref{nn} we have $n_{4,13}=n_{5,12}=n_{8,9}=k=0$. 
By equation \eqref{equ4} we obtain $n_{3,14}+n_{6,11}=0$ so they are both equal to 0. We get a contradiction
to equation \eqref{equ5}. Finally if $g(C)>1$ we have $(m_{\sigma^4},r_{\sigma^4},l_{\sigma^4})=(4,6,8)$ by \cite[Theorem 4.1]{artsa}. 
So by the same argument as in Proposition \ref{genus} this case is not possible since $A=2$.
\end{proof}
\begin{pro}\label{lefschetz8}
Let $\sigma$ be a purely non-symplectic automorphism of order 16 on a $K3$ surface $X$ and $C\subset \rm{Fix(\sigma^2)}$. 
Then $g(C)\leq 1$ and  the following relations for the number of fixed points and curves by $\sigma^2$ hold:
\begin{eqnarray*}
\begin{array}{lll}
n_{2,7} + n_{3,6}& =& 2 + 4 k_{\sigma^2},\\
n_{4,5} + n_{2,7} - n_{3,6}&= &2 + 2 k_{\sigma^2},\\
N_{\sigma^2} &= &2 +  r_{\sigma^2} - l_{\sigma^2} - 2k_{\sigma^2}.
\end{array}
\end{eqnarray*}
where $n_{i,j}$ denote the number of fixed points of type $(i,j)$ for the action of $\sigma^2$. 
\end{pro}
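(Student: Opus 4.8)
The plan is to regard $\sigma^2$ as a purely non-symplectic automorphism of order $8$, with $(\sigma^2)^*\omega_X=\xi\,\omega_X$, and to run the two Lefschetz fixed point formulas exactly as in Proposition \ref{lefschetz16}, now over the order $8$ root $\xi$. First I dispose of the genus bound: any curve $C\subset\Fix(\sigma^2)$ is automatically fixed by $\sigma^4$, so $g(C)\le 1$ follows from the argument of Propositions \ref{genus} and \ref{fix4} (a $\sigma^4$-fixed curve of genus $\ge 2$ forces $(m_{\sigma^4},r_{\sigma^4},l_{\sigma^4})=(4,6,8)$ by \cite[Theorem 4.1]{artsa}, hence $A=2$, contradicting Lemma \ref{exchanged}). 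Thus every fixed curve is rational or elliptic, and an elliptic one contributes $0$ both to the Euler characteristic and to the holomorphic Lefschetz number.

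For the topological Lefschetz formula I read the eigenvalues of $(\sigma^2)^*$ on $H^2(X,\IC)$ off from \eqref{ranks}: eigenvalue $1$ with multiplicity $r_{\sigma^2}$, eigenvalue $-1$ with multiplicity $l_{\sigma^2}$, the eigenvalues $\pm i$ with multiplicity $m_{\sigma^2}$ each, and the four primitive eighth roots with multiplicity $m^1_{\sigma^2}$ each. Since $i+(-i)=0$ and the primitive eighth roots (the roots of $\Phi_8$) sum to $0$, the trace is $r_{\sigma^2}-l_{\sigma^2}$, whence $\chi(\Fix(\sigma^2))=2+r_{\sigma^2}-l_{\sigma^2}$. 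Evaluating $\chi$ directly from the fixed locus—each isolated point counting $1$, each fixed rational curve $2$, an elliptic fixed curve $0$—gives $\chi(\Fix(\sigma^2))=N_{\sigma^2}+2k_{\sigma^2}$, which is the third relation.

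The core is the holomorphic Lefschetz formula, whose left-hand side is $L(\sigma^2)=1+\xi^{-1}=1-\xi^3$. On the right an isolated fixed point of type $(j,k)$ contributes $\big((1-\xi^j)(1-\xi^k)\big)^{-1}$, while a fixed curve has normal bundle eigenvalue $\xi$—at a point of the curve the local action is $\diag(1,\xi)$, the exponent being forced by $(\sigma^2)^*\omega_X=\xi\,\omega_X$—and contributes $(1-g)\frac{1+\xi}{(1-\xi)^2}$, which is $0$ for $g=1$ and $\frac{1+\xi}{(1-\xi)^2}$ for a rational curve. This yields
\begin{multline*}
1-\xi^{3}=\frac{n_{2,7}}{(1-\xi^{2})(1-\xi^{7})}+\frac{n_{3,6}}{(1-\xi^{3})(1-\xi^{6})}\\
+\frac{n_{4,5}}{(1-\xi^{4})(1-\xi^{5})}+k_{\sigma^{2}}\,\frac{1+\xi}{(1-\xi)^{2}}.
\end{multline*}
Clearing denominators with $\xi^4=-1$ and expanding both sides in the $\IQ$-basis $\{1,\xi,\xi^2,\xi^3\}$ of $\IQ(\xi)$, one finds that every contribution lies in the subspace where the coefficient of $\xi^3$ equals minus that of $1$ and the coefficient of $\xi^2$ equals minus that of $\xi$; the four scalar equations therefore collapse to two, which simplify to $n_{2,7}+n_{3,6}=2+4k_{\sigma^2}$ and $n_{4,5}+n_{2,7}-n_{3,6}=2+2k_{\sigma^2}$.

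The main obstacle is precisely this last rationalisation and the linear algebra extracting the two clean relations from the cyclotomic identity; conceptually the two points to secure are that the normal eigenvalue of every fixed curve is exactly $\xi$ and that elliptic fixed curves disappear from both formulas. Once these are in place the computation parallels that of Proposition \ref{lefschetz16}, carried out over $\IQ(\xi)$ rather than $\IQ(\zeta)$.
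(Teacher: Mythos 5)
Your proposal is correct and takes essentially the same route as the paper: the genus bound comes from Proposition \ref{fix4} (every curve in $\Fix(\sigma^2)$ lies in $\Fix(\sigma^4)$), and the three relations are precisely the topological and holomorphic Lefschetz formulas for $\sigma^2$, which the paper invokes without writing out the computation. Your explicit work over $\IQ(\xi)$ — the normal eigenvalue $\xi$ along fixed curves, the vanishing of elliptic contributions, and the collapse of the four rational equations to two because all terms lie in the span of $1-\xi^3$ and $\xi-\xi^2$ — is a correct filling-in of the details the paper leaves implicit.
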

\begin{proof}
Observe that by Proposition \ref{fix4} we have $g(C)\leq 1$ moreover an isolated fixed point for $\sigma^2$ is given by the local action 
$ \left(
  \begin{array}{ c c }
     \xi^i & 0 \\
    0 & \xi^{j}
 \end{array} \right)$, $i+j=1 \mod (8)$. Thus by the holomorphic and topological Lefschetz formulas we have the relations in the statement.
\end{proof}
\begin{remark}\label{suite8}
By Lemma \ref{tree} and with the same notation there the local action of $\sigma^2$ at the intersection points of the curves  $R_i$ appear in the following order:
$$
\ldots, (0,1), (7,2), (6,3), (5,4), (4,5), (3,6), (2,7), (1,0),\ldots
$$
moreover the $\sigma$-fixed points of type $(5,12)$ and $(4,13)$ give $\sigma^2$ fixed points of type $(4,5)$,
the $\sigma$-fixed points of type $(2,15)$ and $(7,10)$ give $\sigma^2$ fixed points of type $(2,7)$ (up to the order). The $\sigma$-fixed points of type $(3,14)$ and $(6,11)$ give $\sigma^2$ fixed points of type $(3,6)$ (up to the order).
\end{remark}
\section{Elliptic Fibrations}
\begin{theorem}\label{elliptic}
Let $C\subset \Fix(\sigma^4)$, if $g(C)=1$ then $\sigma$ acts as an automorphism of order four on $C$  and  we have 
the following cases
\begin{center}
 \begin{tabular*}{0.90\textwidth}{@{\extracolsep{\fill}} c c c c c |c     c|c}
   
  $m^2_{\sigma}$& $m^1_{\sigma}$ & $m_{\sigma}$ & $l_{\sigma}$ & $r_{\sigma}$& $N_{\sigma}$ & $k_{\sigma}$& type of $C^{'}$\\
   \hline 

1&1&0&1&9& 8&1& $\rm{IV}^\ast$\\
\hline
    1&1&0&3&7&6&0 &$\rm{IV}^\ast$\\
         \end{tabular*}
\end{center}
where $C'$ denotes the invariant reducible fiber in the fibration determined by $C$. In particular in this case $\rk\Pic(X)=14$. 
\end{theorem}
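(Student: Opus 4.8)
The plan is to reduce everything to the already-understood order-four automorphism $\sigma^4$ and then to see how $\sigma$ refines that picture on the elliptic fibration carried by $C$.

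First I would invoke \cite[Theorem 3.1]{artsa}, exactly as in the proof of Proposition \ref{ratio}: since $C\subset\Fix(\sigma^4)$ with $g(C)=1$ and since by \eqref{ranks} $l_{\sigma^4},m_{\sigma^4}\in 4\IZ$, the only admissible invariants are $(m_{\sigma^4},r_{\sigma^4},l_{\sigma^4})=(4,10,4)$, with $\Fix(\sigma^4)=C\cup E\cup\{p_1,\dots,p_6\}$ for a single rational curve $E$, and $C$ the unique genus-one component. Moreover $\sigma^4$ acts with order four on the base of the fibration $|C|$, fixing two points, one corresponding to $C$ and the other to a reducible fibre $C'$; the order-four analysis of \cite{artsa} identifies $C'$ as a fibre of type $\mathrm{IV}^\ast$. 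From $m_{\sigma^4}=4m^2_\sigma$ and $l_{\sigma^4}=4m^1_\sigma$ I immediately read off $m^2_\sigma=1$ and $m^1_\sigma=1$; the first of these gives $\rk T_X=8m^2_\sigma=8$, hence $\rk\Pic(X)=14$, and $r_{\sigma^4}=r_\sigma+l_\sigma+2m_\sigma=10$.

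Next I would lift the action to $\sigma$. Since $\sigma$ commutes with $\sigma^4$ it preserves $\Fix(\sigma^4)$, and being the only fixed curve of genus one, $C$ is $\sigma$-invariant; thus $\sigma$ preserves the fibration and acts on the base $\IP^1$. As $\overline{\sigma^4}=\overline{\sigma}^{\,4}$ already has order four, $\overline\sigma$ has order sixteen, so it fixes exactly two points of $\IP^1$, which must be the two already fixed by $\overline{\sigma^4}$, namely $C$ and $C'$. The restriction $\sigma|_C$ has order dividing four, and is nontrivial, since $C\subset\Fix(\sigma)$ would contradict Proposition \ref{ratio}. The crux is to exclude order two: if $\sigma|_C$ were an involution, then $C\subset\Fix(\sigma^2)$, so $\sigma^2$ would be a non-symplectic automorphism of order eight fixing the elliptic curve $C$ and acting with order eight on the base. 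Here I would use the results on order-eight automorphisms, together with the local multiplier relation at the invariant fibre $C$ (which forces the eigenvalue of $\sigma^*$ on $H^0(C,\Omega^1_C)$ to be $\zeta^{1-c}$, where $\overline\sigma$ acts on the base by $u\mapsto\zeta^c u$ with $c\equiv 1\bmod 4$), to rule this configuration out. This exclusion is the main obstacle; once it is settled, $\sigma|_C$ has order four.

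Finally I would split according to the action of $\sigma$ on the fibre $C'$ of type $\mathrm{IV}^\ast$, whose multiplicity-three component is $\sigma$-invariant. Either $\sigma$ preserves every component of $C'$, in which case Lemma \ref{tree} and Remark \ref{suite} fix the local types at the intersection points and one computes $k_\sigma=1$, $N_\sigma=8$; or $\sigma$ interchanges two of the three arms of $C'$ (a reflection), giving $k_\sigma=0$, $N_\sigma=6$. In each case I would feed $(N_\sigma,k_\sigma)$ into the Lefschetz relation $N_\sigma=2+r_\sigma-l_\sigma-2k_\sigma$ of Proposition \ref{lefschetz16} to obtain $r_\sigma-l_\sigma$, and combine it with $r_\sigma+l_\sigma+2m_\sigma=10$ from the first step together with the vanishing $m_\sigma=0$ (obtained by checking that $\sigma^*$ carries no eigenvalue $i$ on $\Pic(X)$, e.g. by counting the $\sigma$-invariant classes supplied by the components of $C'$ and a section). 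This yields $(m_\sigma,l_\sigma,r_\sigma)=(0,1,9)$ with $(N_\sigma,k_\sigma)=(8,1)$ in the first case and $(m_\sigma,l_\sigma,r_\sigma)=(0,3,7)$ with $(N_\sigma,k_\sigma)=(6,0)$ in the second, recording the two rows of the table; together with $m^2_\sigma=1$ this reconfirms $\rk\Pic(X)=14$. I expect the two genuinely delicate points to be excluding the order-two action on $C$ and establishing $m_\sigma=0$.
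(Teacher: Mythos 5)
Your proposal follows the paper's skeleton (reduction to $\sigma^4$ via \cite[Theorem 3.1]{artsa}, giving $(m_{\sigma^4},r_{\sigma^4},l_{\sigma^4})=(4,10,4)$, hence $m^2_\sigma=m^1_\sigma=1$ and $\rk\Pic(X)=14$, then the dichotomy on the action on the fibre $C'$ of type $\mathrm{IV}^*$), but the two steps you yourself flag as delicate are genuine gaps, and neither of your proposed routes works as stated. Concerning the order of $\sigma|_C$: your exclusion of an order-two action is never carried out (it defers to unspecified order-eight classification results, which are not part of this paper's toolkit), and your trichotomy is incomplete, since a nontrivial automorphism of $C$ of order dividing four may be a \emph{translation} by a torsion point, with no fixed points at all; an order-four translation would survive your exclusion and then ruin the count $N_\sigma=8$, which needs two $\sigma$-fixed points on $C$. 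The paper excludes nothing in advance: it allows order two, order four, or a translation, and lets the fixed-point formulas decide. In the first case Lemma \ref{tree} and Remark \ref{suite} give $k=1$ and $n_{2,15}=n_{3,14}=3$ from $C'$ alone; all remaining isolated fixed points lie on $C$ and by Remark \ref{nn} are of type $(8,9)$, $(4,13)$ or $(5,12)$, according to whether the eigenvalue of $d\sigma$ along $C$ is $\zeta^{8}$ or $\zeta^{\pm 4}$. A translation contributes none, an involution contributes four points of type $(8,9)$, an order-four action contributes two points of type $(4,13)$ or $(5,12)$, and equations \eqref{equ1} and \eqref{equ5} eliminate every possibility except $n_{4,13}=2$. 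Thus ``order four on $C$'' is an \emph{output} of the Lefschetz count, not an input; the second case works the same way, forcing $n_{5,12}=2$ on $C$.

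The second gap is $m_\sigma=0$. Arguing that $\sigma^*$ has no eigenvalue $i$ on $\Pic(X)$ ``by counting the invariant classes supplied by the components of $C'$ and a section'' cannot suffice: those classes span a sublattice of rank only $8$ (isomorphic to $U\oplus E_6$), whereas any eigenvalue-$i$ eigenvector lies in $S(\sigma^4)\otimes\IC$, which has rank $r_{\sigma^4}=10$ inside $\Pic(X)$ of rank $14$; by Shioda--Tate the part you do not see comes from the Mordell--Weil group (of rank $6$ when $C'$ is the only reducible fibre), on which your count gives no control. The paper obtains the missing relation from $\sigma^2$: in both cases one checks $N_{\sigma^2}=10$ and $k_{\sigma^2}=1$ (six points in the fibre $C'$, four on $C$, and the multiplicity-three component fixed pointwise by $\sigma^2$), so Proposition \ref{lefschetz8} gives $r_\sigma+l_\sigma-2m_\sigma=10$; combined with $r_\sigma+l_\sigma+2m_\sigma=r_{\sigma^4}=10$ this forces $m_\sigma=0$ and $r_\sigma+l_\sigma=10$, after which \eqref{N3} with $(N,k)=(8,1)$, resp.\ $(6,0)$, yields the two rows of the table. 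Until these two steps are replaced by actual arguments (or by the Lefschetz computations just described), the proof is incomplete.
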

\begin{proof}
If $g(C)=1$ we are in the case $(m_{\sigma^4}, r_{\sigma^4}, l_{\sigma^4})=(4,10,4)$ by \cite[Theorem 3.1]{artsa} and equations \eqref{ranks}. 
Moreover the curve $C$ must be $\sigma$-invariant and so the elliptic fibration induced by $C$ is preserved. 
By \cite[Theorem 3.1]{artsa} $\sigma$ has order 16 on the basis of the fibration, leaving invariant the fiber 
$C$ and the singular fiber fiber $C':=IV^*$. The latter corresponds to the other fixed point for the action of $\sigma$ on the basis $\IP^1$.
By Proposition \ref{ratio} the curve $C$ can not be fixed by $\sigma$, hence $\sigma$ has order $2$ or $4$ on it or it is a translation. There are two possible actions on $C'$. 

{\bf First case}: $IV^*$ contains a fixed rational curve, which is necessarily the component
of multiplicity 3. Then by using the Lemma \ref{tree} and the formulas in Proposition \ref{lefschetz16} we find $N=8$ with,
$k=1$, $n_{2,15}=n_{3,14}=3$ and $n_{4,13}=2$ the others $n_{i,j}$ are zero. In particular $\sigma$ must have two fixed points on $C$ this means that it acts as an automorphism of order four.  

{\bf Second case}: $IV^*$ has a simmetry of order 2. Then the curve of multiplicity 3 contains two isolated fixed
points whith action $(8,9)$. Combining Remark \ref{nn} and Proposition \ref{lefschetz16} we find  $(N,k)=(6,0)$, with $n_{8,9}=2=n_{5,12}$, $n_{7,10}=1=n_{6,11}$, the other $n_{i,j}$ are zero. We observe that also in this case $\sigma$ must have two fixed points on $C$, this means that it acts as an automorphism of order four.

Using the fact that $(m_{\sigma^4}, r_{\sigma^4}, l_{\sigma^4})=(4,10,4)$ we get immediately that in both cases $m^2_{\sigma}=m^1_{\sigma}=1$. Moreover we have that 
$r_{\sigma}+l_{\sigma}+2m_{\sigma}=10$ and in the first case we have $r-l=8$  and in the second case $r-l=4$. 
In both cases we have $N_{\sigma^2}=10$ and $k_{\sigma^2}=1$ so using Proposition \ref{lefschetz8} we obtain the
values of $r,l,m$ given in the table.


\end{proof}
\begin{example}\label{fibr0}
 {\rm Consider the elliptic fibration in Weierstrass form given by :
$$y^2=x^3+ax+bt^8$$
where  $\sigma(x,y,t)=(-x,iy,\zeta^{13}_{16}t)$. By  making the coordinate transformation that replace $x$ by $\lambda^4 x$ and $y$ by $\lambda^6 y$ for a suitable $\lambda\in \IC$ we can assume that $a=1$. Moreover since $b\not=0$ we can apply a coordinate tranformation to $t$ and assume that $b=1$ too. So our equation becomes:
$$y^2=x^3+x+t^8.$$
The fibers preserved by $\sigma$ are over $0,\infty$ and the action at infinity is (see \cite[\S 3]{kondoell}):
$$(x/t^4,y/t^6,1/t)\longmapsto (-ix/t^4,\zeta^{6}_{16}y/t^6,\zeta^{15}_{16}1/t).$$
The discriminant of the fibration is 
$$\Delta(t)=4+27t^{16}.$$
We have that $t=\infty$ is an order eight zero of $\Delta(t)=0$, and $\Delta(t)$ has $16$
simple zeros. Looking in the classification of singular fibers of elliptic fibrations
on surfaces (e.g. \cite[Section 3]{miranda}) we see that the fiber over $t=\infty$ is of type  
$\rm{IV}^\ast$ and the fibration has $16$ fibers of type $I_1$. 
In particular the fiber over $t=0$ is smooth.
By \cite[\S 3]{kondoell} a holomorphic two form is given by $(dt\wedge dx)/2y$ and so the action of $\sigma$ on it is by multiplication by $\zeta_{16}$. In fact we can be even more precise to understand the local action of the automorphism $\sigma$
at the fixed points on $C$. 
If we look at the elliptic fibration locally around the fiber over $t=0$ the equation in $\IP^2\times \IC$ is given by:
$$G(x,y,z,t):=zy^2-(x^3+z^2x+z^3t^8)=0$$
where $(x:y:z)$ are the homogeneous coordinates of $\IP^2$ and the two fixed points for the automorphism $\sigma$ on the fiber $t=0$ are $p_0:=(0:1:0)$ and $p_1:=(0:0:1)$. In the chart $z=1$ and on the open subset $\partial G(x,y,1,0)/\partial x\not=0$ that contains the fixed point $p_1=(0:0:1)$ a one form for the elliptic curve over $t=0$ is:
$$dy/(\partial G(x,y,1,0)/\partial x)=dy/(-3x^2-1)$$
Here the action of $\sigma$ is a multiplication by $i$ so that the action on the holomorphic two form:
 $$dt\wedge (dy/(-3x^2-1))$$
  is a multiplication by  $\zeta_{16}$ as expected, and we see that the local action is of type $(4,13)$. 
Doing a similar computation in an open subset of the chart $y=1$ that contains the fixed point $p_0$ we find again
the same local action. So we are  in the first case of the Proposition \ref{elliptic} with $N=8$. On the other hand the fibration admits also the automorphism $\gamma(x,y,t)=(-x,-iy,\zeta^{5}_{16}t)$. This acts also by multiplication by $\zeta_{16}$ on the holomorphic two form, so $\gamma$ is not a power of $\sigma$. In this case a similar computation as above shows that the local action at the fixed points on the fiber $C$ is of type $(5,12)$, so we are in the second case of the Proposition \ref{elliptic}.}
\end{example}
\begin{pro}\label{invfibr}
 Let $\sigma$ be a purely non-symplectic automorphism of order 16 on a $K3$ surface $X$ such that $\Pic (X)=S(\sigma^8)\cong U\oplus L$ where 
 $L$ is isomorphic to a direct sum of root lattices of types $A_1 , D_{4+n} , E_7 $ or $E_8$ and $\sigma^8$ fixes a curve of genus $g>1$ . Then $X$ 
 carries a jacobian elliptic fibration $\pi:X\longrightarrow \mathbb{P}^1$ whose fibers are $\sigma^8-$invariant and it has reducible fibers 
 described by $L$ and a unique section $E\subset \rm{Fix}(\sigma^8)$. Moreover , if $g>4$ then $\pi$ is $\sigma-$invariant .
\end{pro}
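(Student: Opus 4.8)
The plan is to build the fibration from the $U$-summand, then track how $\sigma^8$ and $\sigma$ act on it.

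\emph{Construction of $\pi$.} Write $U=\langle e,f\rangle$ with $e^2=f^2=0$, $e\cdot f=1$ inside $\Pic(X)=U\oplus L$. After applying the Weyl group generated by the $(-2)$-classes I may assume $F:=e$ is nef; being a primitive isotropic nef class it defines an elliptic fibration $\pi\colon X\to\IP^1$ with $F$ the class of a fibre, and $f-e$ (a $(-2)$-class meeting $F$ once) yields a section, so $\pi$ is jacobian. The sublattice $L$ is orthogonal to $U$ and generated by roots, so these roots are classes of fibre components; the Kodaira types of the reducible fibres are then read off from the ADE type of $L$ ($A_1\rightsquigarrow III$ or $I_2$, $D_{4+n}\rightsquigarrow I_n^\ast$, $E_7\rightsquigarrow III^\ast$, $E_8\rightsquigarrow II^\ast$). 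Since $\Pic(X)=U\oplus L$ with $L$ exactly the lattice spanned by the fibre components, the Shioda--Tate formula gives Mordell--Weil rank $0$ and trivial torsion, so the section $E$ is unique.

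\emph{$\sigma^8$-invariance.} Because $\Pic(X)=S(\sigma^8)$, the involution $\sigma^8$ fixes every class of $\Pic(X)$, in particular $F$ and $[E]$; hence $\sigma^8$ preserves $\pi$ and, being the unique effective representative of its class, the curve $E$. As $g>1$ the curve $C\subset\Fix(\sigma^8)$ is not contained in a fibre, so $C\cdot F>0$ and $\pi|_C\colon C\to\IP^1$ is surjective. For $t\in\IP^1$ pick $x\in C$ over $t$; since $\sigma^8(x)=x$ while $\sigma^8$ sends the fibre over $t$ to the fibre over its image, the induced involution on the base fixes $t$, and surjectivity of $\pi|_C$ forces this base involution to be the identity. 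Thus every fibre is $\sigma^8$-invariant, and since $E\cong\IP^1$ maps isomorphically to the base we get $\sigma^8|_E=\id$, i.e. $E\subset\Fix(\sigma^8)$.

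\emph{The statement for $g>4$.} As $\sigma$ commutes with $\sigma^8$ it preserves $\Fix(\sigma^8)$, and $C$ being the unique non-rational curve there (Theorem \ref{inv}) gives $\sigma(C)=C$. I first bound $d:=C\cdot F$. On a general fibre the non-symplectic involution $\sigma^8$ acts as $-1$ on the fibre $1$-form (the base action being trivial), hence has exactly four fixed points, all lying on horizontal components of $\Fix(\sigma^8)$; with $E\cdot F=1$ this gives $d\le 3$, while $d\ge 2$ since $d=1$ would make $C$ rational. So $F$ cuts out on $C$ a base-point-free $g^1_d$ with $2\le d\le 3$, and for $g>4$ such a pencil is \emph{unique} (the hyperelliptic pencil is unique for every $g\ge 2$, the trigonal pencil for $g\ge 5$). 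Since $\sigma(C)=C$, the map $\sigma$ preserves this unique pencil, so $(\sigma^\ast F)|_C\cong F|_C$ in $\Pic(C)$. Choosing a $\sigma$-invariant ample class $A$ we have $(\sigma^\ast F-F)\cdot A=0$, so $\sigma^\ast F-F$ lies in the negative definite lattice $A^\perp$; together with $(\sigma^\ast F-F)|_C\cong\calO_C$ and $C^2=2g-2>0$ (Hodge index) this forces $\sigma^\ast F=F$, and $\pi$ is $\sigma$-invariant. The delicate points are the bound $d\le 3$, which rests on the exact fixed-point count of $\sigma^8$ on a general fibre, and the final passage from invariance of the $g^1_d$ to $\sigma^\ast F=F$; the hypothesis $g>4$ enters precisely to secure uniqueness of the trigonal pencil, which already fails at $g=4$.
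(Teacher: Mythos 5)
The first three parts of your proof --- building $\pi$ from the $U$-summand, the Shioda--Tate argument for the uniqueness of the section, and the deduction that $\sigma^8$ acts trivially on the base because the genus $g>1$ fixed curve $C$ is horizontal --- are correct and are in substance the paper's own argument (the paper quotes \cite[Lemmas 2.1, 2.2]{kondoell} and \cite[Theorem 6.3]{SS} for these steps). Your count of the four fixed points of $\sigma^8$ on a general fibre, giving $2\le d:=C\cdot F\le 3$, is also sound. The problem is the very last step of the $g>4$ part, precisely the point you flag as delicate.

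You claim that $(\sigma^*F-F)\cdot A=0$, $(\sigma^*F-F)|_C\cong\calO_C$ and $C^2>0$ ``force'' $\sigma^*F=F$ by Hodge index. This implication is false. For $D:=\sigma^*F-F\in A^{\perp}$, negative definiteness of $A^{\perp}$ only yields $D^2<0$ if $D\ne 0$, and that is no contradiction: $D^2=-2\,F\cdot\sigma^*F\le 0$ holds automatically. Triviality of $D|_C$ adds nothing lattice-theoretically, because the restriction map $\Pic(X)\to\Pic(C)$ kills every class supported on curves disjoint from $C$. Concretely, in the case the paper actually applies the proposition to ($\Pic(X)=U\oplus D_4$, Theorem \ref{rank6}, where $g=7$), the fibration has a fibre of type $I_0^*$ with five components, while $C$ meets that fibre with total multiplicity at most $3$; hence at least two components $R_1,R_2$ are disjoint from $C$, and $D':=(R_2\cdot A)\,R_1-(R_1\cdot A)\,R_2$ is a \emph{nonzero} class with $D'\cdot A=0$, $D'\cdot C=0$ and $D'|_C\cong\calO_C$. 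So the conditions you list do not characterize the zero class, and your proof does not reach the conclusion.

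The gap is repairable while keeping your Brill--Noether idea, which is genuinely different from the paper's route. Coincidence of the two pencils means $(\pi\circ\sigma)|_C=\phi\circ\pi|_C$ for some $\phi\in\mathrm{Aut}(\IP^1)$. Consider $\Phi=(\pi,\pi\circ\sigma)\colon X\to\IP^1\times\IP^1$ and the graph $\Gamma_\phi$. If $\sigma^*F\ne F$ then $\Phi(X)\not\subset\Gamma_\phi$, so $Y:=\Phi^*\Gamma_\phi$ is an effective divisor of class $F+\sigma^*F$ containing $C$; writing $Y=kC+Y'$ with $k\ge 1$ and $Y'$ effective not containing the irreducible curve $C$, intersecting with $C$ gives $2d=Y\cdot C\ge kC^2\ge 2g-2$, i.e. $g\le d+1\le 4$, contradicting $g>4$. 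The paper instead avoids Brill--Noether entirely: it shows that $\sigma^*f\ne f$ would force $f\cdot\sigma^*f\ge 2$ (an intersection number $1$ would produce a fixed point on a fibre, making $C$ or $E$ fixed by $\sigma$, or $\sigma$ a translation, all impossible), and then applies the Hodge-index-type inequality $2g-2=C^2\le 2(C\cdot f)^2/(f\cdot\sigma^*f+1)\le 6$ of \cite[Lemma 5]{artsa}. Either repair is fine; as written, however, your final step is a genuine gap.
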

\begin{proof}
 Since $\rm{Pic}(X)=S(\sigma^8)\cong U\oplus L$ the first half of the statement follows from \cite[Lemma 2.1, 2.2]{kondoell}. On other hand, 
 since $\sigma^8$ fixes a curve $C$ of genus $g>1$, then $C$ intersects each fiber of $\pi$ in at least two points. This implies that $\sigma^8$
 preserves each generic fiber of $\pi$ and acts on it as an involution with four fixed 
 points. By \cite[Theorem 6.3]{SS} we have that the Mordell-Weill group of $\pi$ is $MW(\pi) \cong \Pic (X)/T$ where $T$ denote 
 the subgroup of $\Pic(X)$ generated by the zero section and fiber components. Since $L$ is a root lattice and $\Pic (X)\cong U \oplus L$ 
  we have that $MW(\pi)$ is trivial, hence $\pi$ has a unique section $E$. Since $\sigma^8$ preserves each fiber of $\pi$ and $E$ is invariant, we have that $E$ is fixed by $\sigma^8$. 
 This implies that $C$ intersects each fiber in three points and one fixed point for the action of $\sigma^8$ is contained in the  section $E$.
 
  Now we will prove that $\pi$ is $\sigma-$invariant if $g>4$ . Let $f$ be the class of a fiber of $\pi$.
The automorphisms $\sigma$ preserves the curve $C$, and we have that $CE=0$ (the fixed curves for $\sigma^8$ can not intersect). Assume
that $f\not=\sigma^*(f)$ then they intersect in at least  $2$ points. In fact if $f\sigma^*(f)=1$ then this is a fixed point on $f$ and so 
either $C$ is fixed by $\sigma$ which is not possible, or $E$ is fixed by $\sigma$. This is not possible too, since otherwise
each fiber would admit an automorphism of order $16$. Hence $\sigma$ is a translation, which is impossible too. 
Now applying \cite[Lemma 5]{artsa} we find that:
$$
2g-2=C^2\leq \frac{2(C\cdot f)^2}{f\cdot \sigma^*(f)+1}\leq \frac{2\cdot 9}{3}=6
$$ 
This implies $g=g(C)\leq 4$. 
\end{proof}
\section{The rank six case}

\begin{theorem}\label{rank6}
 Let $\sigma$ be an autormorphism of order $16$ acting purely non-symplectically on a $K3$ surface $X$ and assume that $\rm{Pic}(X)=S(\sigma^8)$ has rank $6$. Then $\sigma$ fixes at most one rational curve. 

The corresponding invariants of $\sigma$ are given in the Table below. In any case $n_{4,13}=n_{5,12}=n_{6,11}=0$ and we have $(n_{2,15}, n_{3,14}, n_{7,10}, n_{8,9})=(4,1,1,0)$ in one case and  $(n_{2,15}, n_{3,14}, n_{7,10}, n_{8,9})=(0,1,1,2)$ in the other case.

\begin{center}
 \begin{tabular*}{0.90\textwidth}{@{\extracolsep{\fill}} c c c c c |c  c c c  |c}
   
  $m_\sigma^2$& $m_\sigma^1$ & $m_{\sigma}$ & $l_{\sigma}$ & $r_{\sigma}$& $N_{\sigma}$ & $k_{\sigma}$& $N'$ &$g(C)$& $\rm{Pic}(X)$\\
   \hline 
2&0&0&0&6&6&1&4 &7 & $U\oplus D_4$ \\
\hline
2&0&0&2&4&4&0&2&6& $U(2)\oplus D_4$\\

         \end{tabular*}\\
\end{center}
Here $C$ denotes the $\sigma^8$-fixed curve  of genus $>1$ and $N'$ denotes the number of fixed points that are contained in $C$. 
\end{theorem}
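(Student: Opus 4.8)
The plan is to peel the invariants off from the top eigenvalue downwards, feed the resulting local data into the Lefschetz formulas of Section~1, and finally read off $\Pic(X)$ from the involution $\sigma^8$.

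First I would record that $\rk\Pic(X)=6$ forces $\rk T_X=16$, hence $m^2_\sigma=2$ by the discussion following \eqref{ranks}, so that the last line of \eqref{ranks} reads $r_\sigma+l_\sigma+2m_\sigma+4m^1_\sigma=6$. Specialising \eqref{ranks} to $\sigma^4$ and $\sigma^2$ gives $m_{\sigma^4}=8$, $r_{\sigma^4}=r_\sigma+l_\sigma+2m_\sigma$, $l_{\sigma^4}=4m^1_\sigma$ and $m^1_{\sigma^2}=4$, $m_{\sigma^2}=2m^1_\sigma$, $l_{\sigma^2}=2m_\sigma$. Since $\sigma^8$ fixes a curve of genus $>1$, Proposition \ref{fix4} shows that this curve cannot be $\sigma^4$-fixed, so $\sigma^4$ acts on it as an involution; comparing with the classification of order four non-symplectic automorphisms with $m_{\sigma^4}=8$ in \cite[Theorem 4.1]{artsa} I expect to conclude $l_{\sigma^4}=0$, i.e. $m^1_\sigma=0$ and $r_{\sigma^4}=6$. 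Then $\sigma^2$ is an order eight automorphism with $m_{\sigma^2}=0$ and $m^1_{\sigma^2}=4$, and the corresponding order eight results should force $l_{\sigma^2}=0$, hence $m_\sigma=0$ as well. At this stage $r_\sigma+l_\sigma=6$, the eigenvalues of $\sigma$ on $\Pic(X)=S(\sigma^8)$ are only $\pm1$, so $\sigma$ acts on $\Pic(X)$ as an involution while $\sigma^2$ acts trivially on it.

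Next I would run the Lefschetz machinery of Proposition \ref{lefschetz16}. Since $r_\sigma+l_\sigma=6$, relation \eqref{N3} reads $N=2+(r_\sigma-l_\sigma)-2k$ with $r_\sigma-l_\sigma$ even and $r_\sigma\ge1$; combined with Remark \ref{nn}, which rules out $(N,k)=(8,0)$ and fixes the $n_{j,k}$ for $N\in\{4,6\}$, this leaves a short list of candidates. I would discard the spurious ones, in particular $(r_\sigma,l_\sigma)=(5,1)$, by descending to $\sigma^2$: translating the $\sigma$-fixed points into $\sigma^2$-fixed data through Remark \ref{suite8} and testing them against the order eight balance of Proposition \ref{lefschetz8} (with $r_{\sigma^2}=6$, $l_{\sigma^2}=0$) should leave exactly the two admissible profiles, namely $(r_\sigma,l_\sigma,N,k)=(6,0,6,1)$ with $(n_{2,15},n_{3,14},n_{7,10},n_{8,9})=(4,1,1,0)$, and $(4,2,4,0)$ with $(0,1,1,2)$. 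In particular $k\le1$, which is the assertion that $\sigma$ fixes at most one rational curve.

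The Picard lattice is then identified through the involution $\sigma^8$, whose fixed lattice $S(\sigma^8)=\Pic(X)$ is $2$-elementary of rank $6$ and fixes a curve of genus $>1$. Invoking Nikulin's classification of $2$-elementary lattices \cite{Nikulin1} together with the fact that $\Pic(X)$ must carry the order eight action of $\sigma$, I would cut the possibilities down to $U\oplus D_4$ (length $a=2$) and $U(2)\oplus D_4$ (length $a=4$); Theorem \ref{inv} then gives $g(C)=(16-a)/2$ and $k_{\sigma^8}=(6-a)/2$, that is $(g,k_{\sigma^8})=(7,2)$ and $(6,1)$. Matching this with the two profiles, the profile $r_\sigma=6$ means $S(\sigma)=\Pic(X)$, so $\sigma$ acts trivially on the Picard lattice, which by \cite{Taki} is the lattice $U\oplus D_4$ with $g=7$; the profile $(4,2)$, where the action is non-trivial, is then forced to be $U(2)\oplus D_4$ with $g=6$. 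Finally $N'$, the number of $\sigma$-fixed points lying on $C$, I would obtain by noting that $\sigma$ preserves $C$ and that $\sigma^4|_C$ is a non-trivial involution, so $\sigma|_C$ has order eight; computing its fixed points by Riemann--Hurwitz (equivalently by the holomorphic Lefschetz formula on $C$), after deciding from the local exponents which of the $p_i$ have their $\sigma^8$-fixed tangent direction along $C$ rather than along a rational $\sigma^8$-fixed curve, should give $N'=4$ and $N'=2$ respectively.

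I expect the difficulty to be twofold. The first hard point is forcing $m_\sigma=m^1_\sigma=0$ and eliminating the arithmetically admissible but geometrically impossible profile $(r_\sigma,l_\sigma)=(5,1)$: this rests on the precise order four and order eight tables and on the $\sigma^2$-consistency check, and is the step where a careless count could admit a phantom case. The second is the lattice-theoretic step of showing that \emph{only} $U\oplus D_4$ and $U(2)\oplus D_4$ occur and of pairing each with the correct fixed-point profile, which rests on Nikulin's discriminant-form theory and on the $\sigma$-equivariance of the glueing between $\Pic(X)$ and $T_X$.
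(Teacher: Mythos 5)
Your overall toolkit (Nikulin's involution classification, the order four results, the Lefschetz formulas for $\sigma$ and $\sigma^2$, Riemann--Hurwitz on $C$) matches the paper's, and your Lefschetz bookkeeping does produce the correct two profiles $(r_\sigma,l_\sigma,N,k)=(6,0,6,1)$ and $(4,2,4,0)$. But there is a genuine gap at the step you describe as the pairing being ``forced'': the profile $(4,2,4,0)$ with $(n_{2,15},n_{3,14},n_{7,10},n_{8,9})=(0,1,1,2)$ arises \emph{combinatorially in both branches} of Nikulin's classification. In the branch $(g,k_{\sigma^8})=(7,2)$, i.e.\ $\Pic(X)=U\oplus D_4$, the subcase where $\sigma$ swaps the four $\sigma^4$-fixed points on $C$ in pairs yields exactly the same Lefschetz data $(r,l)=(4,2)$, $N=4$, $k=0$ as the genuine $(g,k_{\sigma^8})=(6,1)$, $U(2)\oplus D_4$ case. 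No amount of Lefschetz or discriminant-form bookkeeping can separate these, since all the numerical invariants coincide; what the paper does is geometric: by Proposition \ref{invfibr} (applicable since $g=7>4$ and $\Pic(X)=U\oplus L$ with $L=D_4$) the surface carries a $\sigma$-invariant elliptic fibration with a fiber $I_0^*$, a non-trivial action on $\Pic(X)$ forces $\sigma$ to exchange two multiplicity-one components of $I_0^*$, and then the third intersection point of the trisection $C$ with $I_0^*$ would be $\sigma$-fixed, contradicting the assumption that $\sigma$ has no fixed point on $C$. Without this argument (or a substitute), your proof does not establish that the non-trivial profile forces $g=6$ and $U(2)\oplus D_4$, which is precisely the content of the table.

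Two smaller points. First, your exclusion of the third $2$-elementary rank-$6$ possibility ($a=6$, i.e.\ $(g,k_{\sigma^8})=(5,0)$) via ``$\Pic(X)$ must carry the order eight action'' is not an argument --- indeed by your own reduction $\sigma$ acts on $\Pic(X)$ at most as an involution; the correct reason (the paper's) is that $k_{\sigma^8}=0$ leaves no curve available to lie in $\Fix(\sigma^4)$, contradicting Proposition \ref{fix4}. Second, you obtain $m_\sigma=0$ by appealing to ``order eight results'' forcing $l_{\sigma^2}=0$; such results are not available in the paper's references, and within its framework $m_\sigma=0$ should instead be extracted, as the paper does, from the combined Lefschetz relations $r+l+2m=6$, $N=2+r-l-2k$ and $N_{\sigma^2}=2+(r+l)-2m-2k_{\sigma^2}$ once $N_{\sigma^2}$ and $k_{\sigma^2}$ are pinned down case by case.
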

\begin{proof}

 By the classification theorem for non-symplectic involutions on $K3$ surfaces given by Nikulin in \cite[\S 4]{nikulinfactor} we have that $(g(C), k_{\sigma^8})$ is either equal to $(5,0) , (6,1)$ or $(7,2)$ .  Observe that the 
case $g(C)=5$ is not possible. In fact in this case since $k_{\sigma^8}=0$ then $k_{\sigma^4}=0$ too and since $C$ is not fixed by $\sigma^4$ by Proposition \ref{fix4}, we get a contradiction awith Proposition \ref{fix4} again. Observe that we have $m^2_{\sigma}=2$ so that $m_{\sigma^4}=8$ by formulas \eqref{ranks}. This means that the automorphism $\sigma^4$ can not have $l_{\sigma^4}\geq 0$ by \cite[Theorem 8.1]{artsa}. This implies that $l_{\sigma^4}=0$ and by \cite[Theorem 6.1]{artsa} or \cite[Main Theorem 1]{Taki} we have two possible cases that we recall below, both have $m^1_{\sigma}=0$.

\underline{The case $(g(C),k_{\sigma^8})=(6,1)$}. The automorphism $\sigma^4$ of order $4$ fixes one rational curve and six points on $C$ by \cite{artsa}, \cite{Taki}. 
By Riemann-Hurwitz formula applied to the automorphism 
$\sigma$ on $C$ we find that either $\sigma$ exchanges two fixed points and permutes the other four or $\sigma$   
fixes two points and the other four are exchanged two by two.
The first case is not possible since then $N=2$ and by Proposition \ref{lefschetz16} we know that $N\geq 4$. 
So we are in the second case. Since again $N\geq 4$ then the rational
curve is invariant but not fixed and so $N=4$ and by Remark \ref{nn} we have $(n_{3,14},n_{7,10},n_{8,9})=(1,1,2)$ the others $n_{i,j}$ are zero. We have moreover that $k_{\sigma^2}=1$ and $N_{\sigma^2}=6$ so
combining the Lefschetz formulas we have $r+l+2m=6$, $4=2+r-l$, $6=2+r+l-2m-2$. That gives $m=0$ and  $r=4$, $l=2$. This is the second case in the table.

\underline{The case $(g(C),k_{\sigma^8})=(7,2)$}. The automorphism $\sigma^4$ of order $4$ fixes one rational curve, four points on $C$ and 
two points on the other rational curve see 
\cite{artsa}, \cite{Taki}. By Riemann-Hurwitz formula applied to the automorphism 
$\sigma$ on $C$ we find that either $\sigma$ exchanges two by two the four points or it fixes each of the four points. In the first case since $N\geq 4$ we have that the two 
rational curves are invariant and they contain 2 fixed points each, so that $N=4$ by Remark \ref{nn}. Then $(n_{3,14},n_{7,10},n_{8,9})=(1,1,2)$ so that $k_{\sigma^2}=1$ and $N_{\sigma^2}=6$. We have  $n_{2,7}+n_{3,6}=6$ and 
since $n_{4,5}=0$ (we have $k_{\sigma^4}=1$) we get $n_{3,6}=1$ and $n_{2,7}=5$.  Using Proposition \ref{lefschetz16} and \ref{lefschetz8} we compute here that $(r,l,m)=(4,2,2)$ and we have also $\Pic(X)=U\oplus D_4$ by \cite[Theorem 6.1]{artsa}. 
By applying Proposition \ref{invfibr} we know that the K3 surface $X$ carries a $\sigma$-invariant elliptic fibration 
with a singular fiber $I_0^*$. Since the action is not trivial on $\Pic(X)$ the automorphism 
$\sigma$ should act non trivially on $I_0^*$. Since $C$ intersects in three points the fiber $I_0^*$ then the only possibility is that $\sigma$ exchanges two components of multiplicity one. Then the
third point on $C$ would be fixed but this is not possible. So the action of $\sigma$ on $C$ fixes the four points. Observe that then the number of fixed points for $\sigma^2$ 
satisfies $n_{2,7}+n_{3,6}\geq 4$ so that $k_{\sigma^2}=1$ by Proposition \ref{lefschetz8}. This again gives $n_{2,7}+n_{3,6}=6$  and so $n_{4,5}=0$ and $n_{2,7}=5$,$n_{3,6}=1$.  Finally Observe that the case $(N,k)=(8,0)$ is not possible for $\sigma$ by Remark \ref{nn} and so we have $(N,k)=(6,1)$. Again by Remark \ref{nn} we have $(n_{2,15},n_{3,14},n_{7,10})=(4,1,1)$. In this case we have $r+l+2m=6$, $r-l=6$, $r+l-2m=6$. We find $m=0$,
$r=6$, $l=0$. So $\sigma$ acts trivially on $\Pic(X)$ and this is the first case in the table.

 \end{proof}
\begin{example}\label{fibr1}
{\rm 1) The case $g(C)=7$, $(r_{\sigma},l_{\sigma})=(6,0)$, $\Pic(X)=U\oplus D_4$. 

Consider as in \cite[Section 3.4]{matthias} the elliptic fibration:
$$
y^2=x^3+t^2 x+(bt^3+t^{11})
$$
with automorphism $\sigma(x,y,t)=(\zeta_{16}^2 x, \zeta_{16}^3 y, \zeta_{16}^2 t)$ (we write here the fibration in a slightly
different way as given in \cite{matthias}).
On $t=0$ the fibration has a fiber $I_0^*$ and on $t=\infty$ the fibration has a fiber $II$. 
The action on the holomorphic two form $(dx\wedge dt)/2y$ is a multiplication by $\zeta_{16}$. This is a one dimensional family and for generic $\lambda$ the action is trivial
on $\Pic(X)$. So we are in the second case of Theorem \ref{rank6}. Observe that the fiber $I_0^*$ contains the four fixed points with local action of type $(2,15)$ and the invariant elliptic cuspidal curve over $t=\infty$ contains the fixed point with local action $(14,3)$ (which is also contained
on the section of the fibration) and the point of type $(7,10)$. In particular observe that the curve $C$ of genus 7 meets with multiplicity 3
the  fiber $II$ at the singular point.\\
Observe that if $b=0$ we get the elliptic fibration with the order 32 automorphism
$$\sigma_{32}(x,y,t)=(\zeta_{32}^{18} x, \zeta_{32}^{11} y, \zeta_{32}^2 t)$$
as described e.g. in \cite{Taki32}. The automorphism $\sigma$ is the square
of the automorphism $\sigma_{32}^{25}$. 

{\rm 2)} The case $g(C)=6$, $(r_{\sigma},l_{\sigma})=(4,2)$, $\Pic(X)=U(2)\oplus D_4$. 

The surfaces of this kind are described in the paper \cite{laza} and
they are double covers of $\IP^2$ ramified on a reducible
sextic which is the product of a smooth quintic and a line.
We consider the special family with equation
in $\IP(3,1,1,1)$:
$$
z^2=x_0(\alpha_0 x_0^4x_2+\beta_0 x_1^5+\beta_1 x_1^3x_2^2+\beta_2 x_1x_2^4).
$$
Observe that the quintic curve is smooth and the K3 surface has five
$A_1$ singularities over the points of intersection
of the quintic curve and the line.
The K3 surface carries the order $16$ non-symplectic automorphism
$$
\sigma(z:x_0:x_1:x_2)\mapsto(\zeta_{16}^3 z: x_0: \zeta_8^7 x_1:\zeta_8^3 x_2).
$$
This acts by multiplication by $\zeta_{16}$ on the 
holomorphic two form:
$$
(dx\wedge dy)/ \sqrt{f}
$$
where $f(x,y)=0$ is the equation of the ramification sextic 
in the local coordinates $x$ and $y$. 
An easy computation shows that the automorphism fixes the points:
$$
(0:1:0:0), \qquad (0:0:1:0), \qquad (0:0:0:1)
$$
Observe that the point $(0:0:0:1)$ is in fact one of the five $A_1$ singularities
on the K3 surface. If we resolve it we find a fixed point on the strict transform of $C$ which is
  the quintic curve on $\IP^2$ (that have genus six) and one fixed point on the strict transform
 of $L$ which denotes the curve $\{x_0=0\}$. The other two fixed points are contained respectvely in $C$ and $L$ (and their respectives 
strict
transforms).
 Observe that the automorphism $\sigma$ exchanges two by two the other points of intersection of $C$ with $L$.
}
\end{example}

\section{The rank fourteen case}

\begin{theorem}\label{rank14}
Let $\sigma$ be an automorphism of order $16$ acting purely non symplectically 
on a K3 surface $X$ and assume that $S(\sigma^8)=\Pic(X)$ has rank $14$. Then the 
surface K3 is one of the surfaces described in Proposition \ref{elliptic} with a fixed elliptic curve 
for the automorphism $\sigma^4$ or it has:
\begin{center}
\begin{tabular*}{0.90\textwidth}{@{\extracolsep{\fill}} c c c c c |c  c c c  |c}
   
  $m_\sigma^2$& $m_\sigma^1$ & $m_{\sigma}$ & $l_{\sigma}$ & $r_{\sigma}$& $N_{\sigma}$ & $k_{\sigma}$& $N'$ &$g(C)$& $\rm{Pic}(X)$\\
   \hline 
1&0&0&1&13&10&1& 2&3 & $U\oplus D_4\oplus E_8$\\
\hline
1&0&1&1&11&8&1&2&2 &$U(2)\oplus D_4\oplus E_8$\\
\hline
1&0&1&5&7&2&0&2&2 &$U(2)\oplus D_4\oplus E_8$\\
\end{tabular*}\\
\end{center}
Here $C$ denotes the $\sigma^8$-fixed curve of genus $>1$ and $N'$ denotes the number of fixed points that are contained in $C$. 
\end{theorem}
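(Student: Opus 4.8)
The plan is to follow the strategy of the proof of Theorem~\ref{rank6}, now specialised to the rank fourteen situation in which $m^2_\sigma=1$. Feeding this value into the relations~\eqref{ranks} gives at once $m_{\sigma^4}=4$ and $l_{\sigma^8}=8$, so that $\sigma^8$ is a non-symplectic involution with $S(\sigma^8)=\Pic(X)$ of rank $14$. First I would analyse $\sigma^4$. Since $m_{\sigma^4}=4$, only the pairs $(r_{\sigma^4},l_{\sigma^4})=(14-4m^1_\sigma,\,4m^1_\sigma)$ with $m^1_\sigma\in\{0,1,2,3\}$ are a priori possible, and the classification of order four non-symplectic automorphisms in~\cite{artsa} cuts these down. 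The value $m^1_\sigma=2$, that is $(r_{\sigma^4},l_{\sigma^4})=(6,8)$, is excluded exactly as in Proposition~\ref{fix4} through the count of pairs of rational curves exchanged by $\sigma^4$ (Lemma~\ref{exchanged}), and $m^1_\sigma=3$ is excluded by~\cite{artsa}. The value $m^1_\sigma=1$ forces $l_{\sigma^4}=4$, hence $\sigma^4$ fixes an elliptic curve and we land in the situation of Proposition~\ref{elliptic}. In the remaining case $m^1_\sigma=0$, so $l_{\sigma^4}=0$ and $r_{\sigma^4}=14$; as $S(\sigma^4)\subseteq S(\sigma^8)=\Pic(X)$ with both lattices of rank $14$, the automorphisms $\sigma^4$ and $\sigma^8$ act trivially on $\Pic(X)$, and $\Fix(\sigma^4)$ consists of a single rational curve together with isolated points.

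Next I would determine the genus of the curve $C\subset\Fix(\sigma^8)$ and the lattice $\Pic(X)$. Applying Theorem~\ref{inv} to the involution $\sigma^8$, whose fixed lattice has rank $14$ and determinant $2^a$, yields $2g(C)=8-a$ and $2k_{\sigma^8}=14-a$, so that $a$ is even. The case $a=0$ cannot occur, because there is no even unimodular lattice of signature $(1,13)$, and hence $g(C)\le 3$. To pin the lattice down I would combine the classification of order eight non-symplectic automorphisms for $\sigma^2$, for which $m^1_{\sigma^2}=2$ is forced by~\eqref{ranks}, with Nikulin's uniqueness theorem for $2$-elementary lattices. This should leave precisely $a=2$, giving $g(C)=3$ and $\Pic(X)\cong U\oplus D_4\oplus E_8$, and $a=4$, giving $g(C)=2$ and $\Pic(X)\cong U(2)\oplus D_4\oplus E_8$; the values $g(C)\le 1$ are to be ruled out, being incompatible with the $\mathbb{Z}[i]$-action of $\sigma^4$ on $T_X$ forced by $(r_{\sigma^4},l_{\sigma^4},m_{\sigma^4})=(14,0,4)$. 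When $\Pic(X)\cong U\oplus D_4\oplus E_8$ I would moreover invoke Proposition~\ref{invfibr} to produce the $\sigma^8$-invariant jacobian fibration with reducible fibres of type $I_0^*$ and $II^*$ and a section inside $\Fix(\sigma^8)$, which organises the fixed rational curves.

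With the genus and the lattice fixed, it remains to compute the fixed locus of $\sigma$ itself. Because the only curve fixed by $\sigma^4$ is rational, $\sigma^4$ does not fix $C$, so $\sigma^4|_C$ is a non-trivial involution and $\sigma$ acts on $C$ with order exactly $8$. A Riemann--Hurwitz computation for this order eight action on a curve of genus $2$ or $3$, constrained by the admissible local types recorded in Remarks~\ref{suite},~\ref{suite8} and~\ref{nn}, should force $N'=2$ fixed points on $C$. The remaining invariants then follow from the Lefschetz relations of Propositions~\ref{lefschetz16} and~\ref{lefschetz8}: relation (III) of Proposition~\ref{lefschetz16} gives $N_\sigma$ in terms of $(r_\sigma,l_\sigma,k_\sigma)$, and feeding in $r_\sigma+l_\sigma+2m_\sigma=r_{\sigma^4}=14$ together with the relations for $\sigma^2$ determines $(r_\sigma,l_\sigma,m_\sigma)$ and $N_\sigma$. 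The admissible values of $k_\sigma$, and hence the splitting of the genus two case into the two sub-cases of the table, are governed by whether $\sigma$ fixes the rational curve contained in $\Fix(\sigma^4)$ or merely preserves it, exactly the dichotomy that separates the two cases of Proposition~\ref{elliptic} and of Theorem~\ref{rank6}.

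The step I expect to be hardest is the simultaneous fixed-point bookkeeping of the last paragraph: one must keep the local actions at the fixed points consistent at the levels of $\sigma$, $\sigma^2$, $\sigma^4$ and $\sigma^8$ through Remarks~\ref{suite} and~\ref{suite8}, and in the genus two case separate the two possible actions on the rational curve of $\Fix(\sigma^4)$ without miscounting the points lying on $C$. A second delicate point is the lattice identification, namely proving that the only $2$-elementary invariants $(14,a,\delta)$ that are realised are those of $U\oplus D_4\oplus E_8$ and $U(2)\oplus D_4\oplus E_8$ and that $g(C)\in\{0,1\}$ does not occur; this requires Nikulin's classification of $2$-elementary lattices together with the order eight classification of $\sigma^2$, and cannot be decided by the Lefschetz numbers alone.
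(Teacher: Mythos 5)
Your opening reduction contains a genuine gap that breaks the whole case division. You claim that $m^1_\sigma=1$ (equivalently $(r_{\sigma^4},l_{\sigma^4},m_{\sigma^4})=(10,4,4)$) forces $\sigma^4$ to fix an elliptic curve, so that this branch ``lands in Proposition~\ref{elliptic}.'' That implication is false: by the order four classification in \cite{artsa}, the invariants $(10,4,4)$ also occur when the $\sigma^8$-fixed curve $C$ has genus $0$ (\cite[Theorem 5.1]{artsa}) and when $C$ has genus $1$ but is not fixed by $\sigma^4$, with $\sigma$ acting on it as a translation (\cite[Theorem 8.4]{artsa}). These two configurations are neither among the surfaces of Proposition~\ref{elliptic} nor in the table, so the theorem is not proved until they are shown to be impossible. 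The paper devotes the cases $g(C)=0$ and $g(C)=1$ of its proof precisely to this: one studies how $\sigma^2$ acts on the $k_{\sigma^8}=3$ or $4$ rational curves fixed by $\sigma^8$, computes the forced values of $n_{4,5}$, $n_{2,7}$, $n_{3,6}$ via Remark~\ref{suite8}, and derives contradictions with Proposition~\ref{lefschetz8}. Similarly, inside the genus $2$ and $3$ cases the sub-case $l_{\sigma^4}=4$ is not dismissed by ``landing elsewhere'': it is killed by the count $N_{\sigma^4}\geq 8$ (resp.\ $\geq 10$) against $N_{\sigma^4}=6$ from \cite[Proposition 1]{artsa}. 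In short, organizing the proof purely by the value of $m^1_\sigma$, instead of by the genus of the $\sigma^8$-fixed curve as the paper does (via Nikulin's list $(g,k_{\sigma^8})=(0,3),(1,4),(2,5),(3,6)$), loses exactly the cases that require work.

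A second error undermines your final bookkeeping: when $m^1_\sigma=0$, so that $\sigma^4$ acts trivially on $\Pic(X)$ of rank $14$, the fixed locus of $\sigma^4$ is \emph{not} ``a single rational curve together with isolated points.'' By \cite[Theorem 6.1]{artsa} one has $k_{\sigma^4}=3$, and among the $k_{\sigma^8}=5$ or $6$ rational curves fixed by $\sigma^8$ there remain two (resp.\ three) curves $R_i$ (resp.\ $T_i$) that are $\sigma^4$-invariant but not fixed. The entire derivation of the table in the paper consists of tracking how $\sigma$ and $\sigma^2$ act on these curves and on $C$: the dichotomy $k_{\sigma^2}=1$ versus $k_{\sigma^2}=2$, the Riemann--Hurwitz alternatives on $C$, and the sub-cases $n_{8,9}=0$ or $2$ determined by whether $\sigma$ fixes or exchanges the $\sigma^2$-fixed curves. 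With only one rational curve in the picture, your Lefschetz computations cannot reproduce the three rows of the table (nor detect that the case $g=3$, $k_{\sigma^2}=2$, $w\in\{0,2\}$ is impossible). Finally, your exclusion of $g(C)\leq 1$ in the trivial-action branch ``by the $\mathbb{Z}[i]$-action on $T_X$'' is not an argument: the rank of $T_X$ is $8$, which is perfectly compatible with a $\mathbb{Z}[i]$-module structure; what actually excludes these genera under trivial $\sigma^4$-action is again the classification of \cite[Theorem 6.1]{artsa}, which only lists the lattices $U\oplus D_4\oplus E_8$ and $U(2)\oplus D_4\oplus E_8$ in rank $14$.
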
 
\begin{proof} 
By \cite[\S 4]{nikulinfactor} we know that for the genus $g:=g(C)$ of the fixed curve 
by $\sigma^8$ and the number $k_{\sigma^8}$ of rational curves (different from $C$) holds:
$$
(g, k_{\sigma^8})=(0,3),~(1,4),~(2,5),~(3,6)
$$

{\bf The case $g(C)=0$}. We are in the case of \cite[Theorem 5.1]{artsa}
for $\sigma^4$, so we have $(r_{\sigma^4}, l_{\sigma^4}, m_{\sigma^4})=(10,4,4)$
and since $N_{\sigma^4}=6$ and $k_{\sigma^8}=3$, we have $N_{\sigma}=4, 6, 8$ by Proposition \ref{lefschetz16}.
Moreover since $k_{\sigma^4}=1$ then $k_{\sigma^2}$ and $k_{\sigma}$ are $0$ or $1$.

Assume first $k_{\sigma^2}=0$ since $\sigma^4$ acts in a different way on the four rational curves, these must be preserved by $\sigma$ and so also $\sigma^2$. We have $n_{4,5}=2$, $n_{2,7}=3=n_{3,6}$ by Remark \ref{suite8}. These contradicts
Proposition \ref{lefschetz8}. 
If $k_{\sigma^2}=1$ then $n_{4,5}=0$ and $n_{2,7}=3=n_{3,6}$. This again contradicts Proposition \ref{lefschetz8}.
 
{\bf The case $g(C)=1$}. We can assume $C\not\subset \Fix(\sigma^4)$ otherwise we have discussed this case already in 
Theorem \ref{elliptic}. Since $C$ is fixed by $\sigma^8$ then $C$ is also $\sigma$-invariant. Hence $\sigma$ acts as a translation
on $C$ (otherwise $C$ would admits an automorphism of $2$-power order bigger than 4, which is not possible). So that 
$C$ does not contain fixed points for $\sigma$. By \cite[Theorem 8.4]{artsa} we get that $N_{\sigma}=4, 6, 8$ and $k_{\sigma}=1$ or $0$.
Studying the action of $\sigma^2$ on the four rational curves and using the same argument as before, one shows easily that this
case is not possible.

{\bf The case $g(C)=2$}. By Proposition \ref{fix4} we have $k_{\sigma^4}\geq 1$ so that $\sigma^4$ fixes at least a rational curve.  Moreover by formulas \eqref{ranks} we have $r_{\sigma^4}+l_{\sigma^4}=14$ and $l_{\sigma^4}, m_{\sigma^4}\in 4\IZ$.  Observe that $m_{\sigma^4}=4m^2_{\sigma}=4$. By \cite[Theorem 8.1]{artsa} if $l_{\sigma^4}>0$ then we have $l_{\sigma^4}+m_{\sigma^4}=4$ or $8$. The first case is not possible, if $l_{\sigma^4}+m_{\sigma^4}=8$ then $l_{\sigma^4}=4$ and by \cite[Theorem 8.1]{artsa} we have $k_{\sigma^4}=1$. Observe that $\sigma$ preserves or permutes two by two the four rational curves 
not fixed by $\sigma^4$ so that in any case $N_{\sigma^4}\geq 8$. By \cite[Proposition 1]{artsa} 
we have $N_{\sigma^4}=6$ which contradicts the previous inequality.
Hence $l_{\sigma^4}=0$ and so $\sigma^4$ acts trivially on $\Pic(X)$. By \cite[Theorem 6.1]{artsa} we have
$(m_{\sigma^4}, r_{\sigma^4}, n_1,n_2,k_{\sigma^4})=(4,14,4,6,3)$ where $N_{\sigma^4}=n_1+n_2$ and $n_2$ is the number of fixed points on $C$. So we have $4$ points contained in the two rational curves that are $\sigma^4$-invariant but not fixed. We call these curves $R_1$ and $R_2$. We study now the action of $\sigma$ and $\sigma^2$ on the $5$ rational curves, fixed by $\sigma^8$, and on $C$.

\underline{The automorphism $\sigma^2$}. We have $k_{\sigma^2}\leq 3$ and at least one of the five curves is preserved or fixed. By using Remark \ref{suite8} we have: $n_{4,5}\in 2\IZ$ (points of this type can occur only on the rational curves) and $n_{2,7}+n_{3,6}\leq 10$ (we have $N_{\sigma^4}=10$,  at most 6 fixed points are on $C$ and points of this type are not contained on rational curves that are fixed for $\sigma^4$ but can be contained
in the two rational curves that are only $\sigma^4$-invariant). By using Proposition \ref{lefschetz8} we obtain that $k_{\sigma^2}\leq 2$. If $k_{\sigma^2}=0$, since the action of $\sigma^4$ is not the same, then all the rational curves are preserved by $\sigma^2$ in particular $n_{4,5}=6$ and $n_{2,7}\geq 2$ $n_{3,6}\geq 2$. This contradicts Proposition \ref{lefschetz8}. We are left with the cases with $k_{\sigma^2}=1$ or $k_{\sigma^2}=2$.
 
i) $k_{\sigma^2}=2$. By Proposition \ref{lefschetz8} we get $n_{2,7}+n_{3,6}=10$ this means that the curve $C$ must contain six fixed points for $\sigma^2$ and the other four fixed points are contained in the two $\sigma^4$-invariant curves $R_1$ and $R_2$. In particular we have $n_{2,7}\geq 2$ and $n_{3,6}\geq 2$, and $n_{4,5}=2$. Since by Proposition \ref{lefschetz8} we have 
 $n_{4,5}=2 n_{3,6}-4$ we get $n_{3,6}=3$, $n_{2,7}=7$, $N_{\sigma^2}=12$. 

ii) $k_{\sigma^2}=1$. By Proposition \ref{lefschetz8} we have $n_{2,7}+n_{3,6}=6$. Observe that for the same reason as above the remaining rational curves can not be exchanged two by two. So these are invariant. This gives  $n_{2,7}\geq 2$, $n_{3,6}\geq 2$ and $n_{4,5}=4$. Using Proposition \ref{lefschetz8} we obtain that $n_{2,7}=n_{3,6}=3$. And two fixed points are contained in $C$. The other points on $C$ fixed by $\sigma^4$ form a $\sigma$-orbit of length four.

\underline{The automorphism $\sigma$}. First observe that using Riemann-Hurwitz formula on $C$ we have two possibilities: 
$C$ contains 2 fixed points and the other four points are permuted by $\sigma$ in one orbit (this is case ii)) or the six points are exchanged two by two and so fixed by $\sigma^2$ (this is case i)).

i) In this case $\sigma$ exchanges two by two the points on $C$. We have $n_{5,12}=n_{4,13}=1$
 since these two points correspond to the two fixed points with local action $(4,5)$ for $\sigma^2$ and are contained on a rational curve (see Remark \ref{suite}). Assume that $R_1$ and $R_2$ are not exchanged. We have $n_{2,15}+n_{7,10}+n_{3,14}+n_{6,11}=4$ and $n_{2,15}=n_{3,14}$, $n_{7,10}=n_{6,11}$. But this contradicts equation \eqref{equ5} in Proposition \ref{lefschetz16}. 
If $R_1$ and $R_2$ are exchanged we have $n_{3,14}=n_{6,11}=0$, $n_{2,15}=n_{7,10}=0$ and $n_{5,12}=n_{4,13}=1$. But this contradicts the equality $n_{3,14}-n_{6,11}=1$ in Proposition \ref{lefschetz16}. 

ii) In this case $C$ contains two fixed points for $\sigma$. We have $n_{8,9}=2w$, with $w=0,1$. Moreover by Remark \ref{suite} we have $n_{5,12}=n_{4,13}=2$ or $n_{5,12}=n_{4,13}=0$. If $n_{8,9}=2$ so that $k_{\sigma}=0$ an easy computation using the equations
of Proposition \ref{lefschetz16} shows that the first case with $n_{5,12}=n_{4,13}=2$ is not possible. If $n_{5,12}=n_{4,13}=0$ again using Proposition \ref{lefschetz16} 
we find that $n_{3,14}=n_{7,10}=1$ the other $n_{ij}$ are zero. One computes $(r_{\sigma},l_{\sigma},m_{\sigma})=(7,5, 1)$ and we have 
  $\Pic(X)=U(2)\oplus D_4\oplus E_8$. Observe that in this case the remaining $\sigma^8$-fixed rational curves are exchanged two by two
by $\sigma$. If $n_{8,9}=0$ so that $k_{\sigma}=1$ again one computes using Proposition \ref{lefschetz16} that :
$$
(N, k, n_{8,9}, n_{2,15},n_{3,14}, n_{4,13}, n_{5,12},n_{6,11}, n_{7,10})=(10, 1, 0, 3,2,2,2,1,0)
$$
and $(r_{\sigma},l_{\sigma},m_{\sigma})=(11,1, 1)$. Moreover we have $\Pic(X)=U(2)\oplus D_4\oplus E_8$, .

{\bf The case $g(C)=3$}. By Proposition \ref{fix4} we have $k_{\sigma^4}\geq 1$ so that $\sigma^4$ fixes at least a rational curve.  We have moreover by formulas \eqref{ranks} that $r_{\sigma^4}+l_{\sigma^4}=14$ and $l_{\sigma^4}, m_{\sigma^4}\in 4\IZ$ and observe that $m_{\sigma^4}=4m^2_{\sigma}=4$. By \cite[Theorem 8.1]{artsa} if $l_{\sigma^4}>0$ then we have $l_{\sigma^4}+m_{\sigma^4}=4$ or $8$. The first case is not possible, if $l_{\sigma^4}+m_{\sigma^4}=8$ then $l_{\sigma^4}=4$ and by \cite[Theorem 8.1]{artsa} we have $k_{\sigma^4}=1$. Observe that $\sigma$ preserves or permutes some of the five rational curve not fixed by $\sigma^4$ so that in any case $N_{\sigma^4}\geq 10$. By \cite[Proposition 1]{artsa} 
we have $N_{\sigma^4}=6$, which is not possible. Hence $l_{\sigma^4}=0$ and so $\sigma^4$ acts trivially on $\Pic(X)$. By \cite[Theorem 6.1]{artsa} we have
$(m_{\sigma^4}, r_{\sigma^4}, n_1,n_2,k_{\sigma^4})=(4,14,6,4,3)$ where $N_{\sigma^4}=n_1+n_2$ and $n_2$ is the number of fixed points on $C$. We have hence $6$ points contained in the three rational curves that are $\sigma^4$-invariant but not fixed. We call these curves $T_i$, $i=1,2,3$. We study now the action of $\sigma$ and $\sigma^2$ on the $6$ rational curves fixed by $\sigma^8$ and on $C$.

\underline{The automorphism $\sigma^2$}. We have $k_{\sigma^2}\leq 3$ and observe that since $\sigma$ can not permute the four curves, since the action of $\sigma^4$ is different, then each curve is preserved by $\sigma^2$.  Moreover we  have $n_{4,5}\in 2\IZ$, and these are at most $6$, in fact points of this type can occur only on the rational curves,   and $n_{2,7}+n_{3,6}\leq 10$ (we have at most 4 fixed points on $C$ and points of this type are not contained on rational curves that are fixed for $\sigma^4$, but can be contained
in the three rational curves that are only $\sigma^4$-invariant). Again by using Proposition \ref{lefschetz8} we find  that $k_{\sigma^2}\leq 2$. If $k_{\sigma^2}=0$ then $n_{2,7}+n_{3,6}=2$ but since all the rational curves are preserved $n_{4,5}=6$ and we get a contradiction using Proposition \ref{lefschetz8}. We are left with the cases with $k_{\sigma^2}=1$ or $k_{\sigma^2}=2$.

i) $k_{\sigma^2}=2$. Here we get $n_{2,7}+n_{3,6}=10$ this means that the curve $C$ must contain four fixed points for $\sigma^2$ and the other six points are contained in the three $\sigma^4$-invariant curves $T_1$, $T_2$ and $T_3$. In particular we have $n_{2,7}\geq 3$ and $n_{3,6}\geq 3$, $n_{4,5}=2$. Moreover $n_{4,5}=2 n_{3,6}-4$ so we get  $n_{3,6}=3$, $n_{2,7}=7$, $N_{\sigma^2}=12$ (by Proposition \ref{lefschetz8}). 

ii) $k_{\sigma^2}=1$: Here we get $n_{2,7}+n_{3,6}=6$ by Proposition \ref{lefschetz8}. Observe that for the same reason as above the remaining rational curves can not be exchanged two by two. So these are invariant. This gives  $n_{2,7}\geq 3$, $n_{3,6}\geq 3$ and $n_{4,5}=4$. We get using Proposition \ref{lefschetz8} that $n_{2,7}=n_{3,6}=3$, and so the four points on $C$ fixed by $\sigma^4$ form a $\sigma$-orbit of length four.

\underline{The automorphism $\sigma$}. By using Riemann-Hurwitz formula there are two possible actions on $C$: The automorphism $\sigma$ exchanges $2$ points and fixes the other two (this is case i)) or the four points form a $\sigma$-orbit (this is case ii)).

i) We have $n_{8,9}=2w$ and since $k_{\sigma^2}=2$ we have $0\leq w\leq 2$.  Moreover $n_{5,12}=n_{4,13}=1$
(since these two points correspond to the two fixed points with local action $(4,5)$ for $\sigma^2$). If $w=0$ and $k=0$, so that the two $\sigma^2$-fixed curves are exchanged by $\sigma$, then using Proposition \ref{lefschetz16} one sees that this case is not possible. If $w=0$ and   $k=2$ using Proposition \ref{lefschetz16} we get $N=14$ which is impossible by looking at the geometry (in fact in this case we have $N\leq 12$). 

If $w=1$, then $k=1$ and we find $N=12$ with
$$
(N, k, n_{8,9}, n_{2,15},n_{3,14}, n_{4,13}, n_{5,12},n_{6,11}, n_{7,10})=(12, 1, 2, 3,2,1,1,1,2)
$$
This is the case in the statement. 

If $w=2$ and $k=0$ this is not possible by using equation in Proposition \ref{lefschetz16}. 
 
ii) We have $n_{8,9}=2w$ and since $k_{\sigma^2}=1$ we have $w=0, 1$. If $w=0$ then $k=1$ and $n_{5,12}=n_{4,13}=2$ or $n_{5,12}=n_{4,13}=0$. If $n_{5,12}=n_{4,13}=2$ we obtain 
$n_{6,11}=1$ and $n_{7,10}=0$ which is impossible since the fixed points by $\sigma$ are contained in the rational curves that are fixed by $\sigma^8$ (see Remark \ref{suite}). If $n_{5,12}=n_{4,13}=0$ then two of the $\sigma^4$-fixed curves are exchanged. By using Proposition \ref{lefschetz16} we get $n_{7,10}=1$, $n_{2,15}=4$, $n_{3,14}=1$ (the other $n_{ij}$ are zero), but this is not possible since the isolated points fixed by $\sigma$ are contained in rational curves (see Remark \ref{suite}). 
 
If $w=1$ then $k=0$ then again $n_{5,12}=n_{4,13}=2$ or $n_{5,12}=n_{4,13}=0$. By using Proposition \ref{lefschetz16} we see that the first case is not possible. If $n_{5,12}=n_{4,13}=0$ then two of the $\sigma^4$-fixed curves are exchanged. By Proposition \ref{lefschetz16} we find $N=4$. This is not possible in fact if the curves $T_i$ are preserved then $N=6$, if two of them are exchanged we get $N=2$. In any case we get a contradiction.  

\end{proof}
\begin{example}\label{fibr2}
{1) \bf The case $g(C)=3$} (see \cite{Taki}). Consider the elliptic  fibration: 
$$
y^2=x^3+t^2x+t^7
$$
This carries the order 16 automorphism $\sigma(x,y,t)=(\zeta_{16}^{2} x, \zeta_{16}^{11} y, \zeta_{16}^{10} t)$.
The discriminant is $t^6(4+27 t^{8})$ so over $t=0$ the fibration has a fiber $I_0^*$ and over $t=\infty$
the fibration has a fiber $II^*$. The automorphism $\sigma$ preserves the $II^*$ fiber and fixes the component 
of multiplicity 6. The genus 3 curve cuts the fiber $II^*$ in the isolated component of multiplicity 3.  
Finally $\sigma$ exchanges two curves in the $I_0^*$ fiber (this corresponds to $l_{\sigma}=1$), it leaves invariant
the component of multiplicity two and contains two fixed point on it. Using Remark \ref{suite} it is easy to find the local action at the $12$ fixed points.
In this case we have $\Pic(X)=U\oplus D_4\oplus E_8$.

2) {\bf The case $g(C)=2$ and $k_{\sigma}=0$}. We consider the K3 surface double cover of $\IP^2$ ramified on a special reducible sextic as in Example \ref{fibr1}, 2). We consider the quintic with a  special equation,
more precisely we assume that the reducible sextic  $(L=\{x_0=0\})\cup C$ has the equation:
$$
x_0(x_0^4x_2+x_1^5-2x_1^3x_2^2+x_2^4x_1)=0,
$$
and recall that the automorphism is:
$$
\sigma(z:x_0:x_1:x_2)\mapsto(\zeta_{16}^3 z: x_0: \zeta_8^7 x_1:\zeta_8^3 x_2).
$$
The line  $L=\{x_0=0\}$ meets the quintic in the point $(0:0:1)$ and two further points $(0:1:1)$ and $(0:-1:1)$, that are in fact exchanged by the automorphism $\sigma$. By studying the partial derivatives of the equation
of $C$ one sees that these are singular points. These are in fact $A_3$ singularities. We explain the
computations in detail for the point $(0:1:1)$. In the chart $x_2=1$ the equation of $C$ becomes:
$$
x_0^4+x_1^5-2x_1^3+x_1=0
$$
We translate the point $(0,1)$ to the origin
and we get an equation in new local coordinates (here $x_0=y$): 

$$x^2(x^3+5x^2+8x+4)+y^4=0$$
So we have a double point at $(0,0)$ and by making a coordinates transformation as in \cite[Ch. II, section 8]{BPV}
we obtain the local equation:
$$
x^2+y^4=0
$$
which is an $A_3$ singularity. Now as explained again in \cite[Ch. II, section 8]{BPV} or also in  \cite[Lemma 3.15]{laza}
this gives a $D_6$ singularity of the reducible ramification sextic. The same happens at the point $(0:-1:1)$ since the two points are exchanged
by $\sigma$. This means that the K3 surface defined by
$$
z^2=x_0(x_0^4x_2+x_1^5-2x_1^3x_2^2+x_2^4x_1)
$$
has two $D_6$ singularities and one $A_1$ singularity (coming from the intersection point $(0:0:1)$). Let $X$ be the desingularization of the double cover. The rank of the Picard group is at least $14$ but since the automorphism of order $16$ acts non-symplectically on it, the rank is exactly $14$ and $\Pic(X)=U(2)\oplus D_4\oplus E_8$. 
Observe that the $(-2)$-curve coming from the resolution of the $A_1$ singularity can not be fixed, because it intersects $C$ and $L$ on $X$ (we call again in this way the strict transforms) that are $\sigma^8$ fixed. Moreover since the two $D_6$ singularities are exchanged
we have $k=0$. Observe that the induced automorphism on $\IP^2$ fixes also the point $(0:1:0)\in L$ and the point $(1:0:0)\in C$ which together with the two intersection points with $L$ and $C$ of the excetional $(-2)$-curve on the $A_1$ singularity  gives $N=4$. 
\begin{remark}
If $\rk S(\sigma^8)=14$ then the automorphism $\sigma$ acts on $S(\sigma^8)^{\perp}\otimes \IC$ by the eight primitive roots of unity $\zeta_{16}^i$, $i=1,3,\ldots, 15$. In particular each eigenspace is one-dimensional, so by applying the construction for the moduli space of K3 surfaces with non-symplectic automorphisms as described in \cite[Section 11]{DK}, we see that in fact this is zero dimensional. This is the case in Theorem \ref{elliptic} and in Theorem \ref{rank14}. More precisely  
in Theorem \ref{rank14} if we fix $\Pic(X)=S(\sigma^8)=U(2)\oplus D_4\oplus E_8$ and both cases exist, 
we expect to have the same K3 surface with two different automorphisms acting on it.  
If $\rk S(\sigma^8)=6$ using the same construction as above one finds that the dimension of the moduli space is one.
 \end{remark}


\end{example}

\bibliographystyle{amsplain}
\bibliography{Biblio}
\end{document}